\newcommand{%
    \import{./figures/}{.pdf_tex}
}[1]{%
    \import{./figures/}{#1.pdf_tex}
}
\newtheorem{theorem}{Theorem}
\newtheorem{lemma}[theorem]{Lemma}
\newtheorem{proposition}[theorem]{Proposition}
\theoremstyle{definition}
\newtheorem{example}[theorem]{Example}
\newtheorem{corollary}[theorem]{Corollary}
\theoremstyle{remark}
\numberwithin{equation}{section}
\newcommand{\R}{\mathbb{R}}
\newcommand{\mleq}{\preccurlyeq}
\newcommand{\mgeq}{\succcurlyeq}
\begin{document}

\title{{Complete Minors in Complements of Non-Separating Planar Graphs}}

\date{\today}
\author{
Leonard Fowler, Gregory Li, and Andrei Pavelescu
}

\address{ \textit{fowlel@rpi.edu}, Rensselaer Polytechnic Institute, Troy, NY 12180}
\address{\textit{gregoryli@college.harvard.edu}, Harvard University,
Cambridge, MA 02138}
\address{\textit{andreipavelescu@southalabama.edu}, Department of Mathematics and Statistics, University of South Alabama, Mobile, AL  36688}

\maketitle
\rhead{Complete Minors in Complements of Non-Separating Planar Graphs}

\begin{abstract}

We prove that the complement of any non-separating planar graph of order $2n-3$ contains a $K_n$ minor, and argue that the order $2n-3$ is lowest possible with this property. 
To illustrate the necessity of the non-separating hypothesis, we give an example of a planar graph of order 11 whose complement does not contain a $K_7$ minor.  We argue that the complements of planar graphs of order 11 are intrinsically knotted. We compute the Hadwiger numbers of complements of wheel graphs.
\end{abstract}

\section{Introduction}

A planar graph $G$ is \textit{non-separating planar} if and only if there exists a \textit{planar embedding} of $G$ in $\mathbb{R}^2$ where for any cycle $C \subseteq G$, the vertices of $G\backslash C$ are not separated between the region $R$ in $\R^2$ enclosed by $C$ and the region $\R^2 \backslash R$.
 Dehkordi and Farr \cite{DF} classified maximal non-separating planar graphs as (1) wheel graphs, (2) elongated triangular prism graphs, or (3) maximal outerplanar graphs.
In \cite{PP}, Pavelescu and Pavelescu proved that the complements of non-separating planar graphs of order ten are intrinsically knotted. In this article, using the classification in  \cite{DF},  we prove that the complements of  non-separating planar graphs of order 11 contain $K_7$ as a minor, and we generalize this result by the following theorem:

\begin{theorem}
\label{mainIK}
Let $G$ be a non-separating planar graph of order $2n-3$, with $n\ge 7$. Then $\overline{G}$, the complement of $G$, admits a $K_n$ minor.
\end{theorem}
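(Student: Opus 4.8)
The plan is to pass to the maximal case and then run through the Dehkordi--Farr trichotomy. If $H$ is a spanning subgraph of $H'$, then $\overline{H'}$ is a spanning subgraph of $\overline{H}$, so every $K_n$ minor of $\overline{H'}$ is one of $\overline{H}$; since each non-separating planar graph of order $2n-3$ is a spanning subgraph of a maximal one of the same order, it suffices to treat $G$ maximal non-separating planar of order $2n-3$. By \cite{DF}, $G$ is then (1) the wheel on $2n-3$ vertices, that is, a hub adjacent to every vertex of a $(2n-4)$-cycle, (2) an elongated triangular prism, or (3) a maximal outerplanar graph on $2n-3$ vertices, and I would handle these three families separately.

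For the wheel the hub is complete to the rim, so its complement is the disjoint union of an isolated vertex and $\overline{C_m}$, the complement of the $m$-cycle with $m=2n-4$; index the rim cyclically by $0,1,\dots,m-1$, so that in $\overline{C_m}$ two vertices are adjacent exactly when their indices differ by something other than $\pm 1$. I would realize $K_n$ as a minor of $\overline{C_m}$ from the $n-4$ antipodal pairs $\{\,i,\,i+(n-2)\,\}$, with $i$ ranging over an $(n-4)$-element subset of $\{0,1,\dots,n-3\}$ chosen so that the four uncovered vertices are pairwise non-consecutive on the rim, together with those four vertices as singleton branch sets. Each such pair spans an edge of $\overline{C_m}$ since $n-2\ge 5$; any two branch sets of size at least two are automatically adjacent in $\overline{C_m}$ once $m>4$; a pair can fail to be adjacent to a singleton $\{z\}$ only if it equals $\{z-1,z+1\}$, which is impossible here as the pairs have index-gap $n-2>2$; and the four singletons are pairwise adjacent by the choice of uncovered vertices. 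This produces the required $K_n$ minor and is exactly the wheel-complement computation announced in the abstract; the elongated triangular prism family should yield to an entirely analogous explicit construction.

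The maximal outerplanar case is the crux. Here $G$ is a triangulation of a $(2n-3)$-gon, consisting of the Hamiltonian cycle $v_1v_2\cdots v_{2n-3}$ together with $2n-6$ pairwise non-crossing chords, so $\overline{G}$ is $\overline{C_{2n-3}}$ with those $2n-6$ edges deleted. It is natural to look for a $K_n$ minor made of $n-3$ pairs and $3$ singletons covering all $2n-3$ vertices, built inside $\overline{C_{2n-3}}$ from ``long'' non-edges of the cycle: the non-edges of maximum length form a single spanning cycle (a star polygon, since $\gcd(n-2,2n-3)=1$), from which one extracts matchings of size $n-3$ that leave three vertices to serve as singletons, and one then checks that pair connectivity, pair--pair, pair--singleton and singleton--singleton adjacencies all survive in $\overline{G}$. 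The heart of the matter is that there are only $2n-6$ chords to avoid whereas the supply of long non-edges and of admissible singleton triples is far larger, so some choice dodges every chord; the main obstacle is making this counting uniform over all triangulations of the $(2n-3)$-gon, in particular over degenerate shapes such as the fan, where two natural singleton candidates are forced to be the neighbours of one high-degree vertex. An equivalent route is induction on $n$ with base case $n=7$: delete two non-adjacent degree-$2$ vertices to obtain a maximal outerplanar graph of order $2(n-1)-3$, apply the inductive hypothesis, and re-attach the two deleted vertices as new singleton branch sets, the difficulty resurfacing as the need to ensure the inherited $K_{n-1}$ minor has no branch set contained in the two-element neighbourhood of either deleted vertex.

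Finally, that the order $2n-3$ cannot be lowered, and that the non-separating hypothesis is needed, would be settled separately by the explicit order-$(2n-4)$ graph and the planar graph of order $11$ mentioned in the abstract, which play no role in the minor constructions above.
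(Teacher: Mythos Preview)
Your reduction to the maximal case and appeal to the Dehkordi--Farr trichotomy match the paper exactly, and your wheel argument is correct (and in fact a bit more direct than the paper's, which proves the stronger Hadwiger-number statement $K_{\lfloor 3(m-1)/4\rfloor}\mleq\overline{W_m}$ and then specializes). But the proposal is not yet a proof: two of the three cases are left open.

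For elongated triangular prisms you offer only ``should yield to an analogous explicit construction.'' The paper does not do this by a direct construction; it verifies the five order-$11$ prisms by hand and then inducts. The inductive step is the same idea you are groping toward in the outerplanar case: find two degree-$2$ vertices $v,v'$ of $G$ with $N_G(v)\cap N_G(v')=\emptyset$ (either on different subdivided rails, or far apart on the single long rail), and contract the edge $(v,v')$ in $\overline{G}$. The merged vertex is then adjacent to \emph{every} other vertex, and deleting it leaves the complement of a smaller elongated prism.

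For maximal outerplanar graphs your inductive sketch has the right ingredients but the wrong bookkeeping. You propose to delete two non-adjacent degree-$2$ vertices, apply induction to get $K_{n-1}$, and then reinsert them as \emph{two} singleton branch sets; but you only need to gain \emph{one} branch set, and as you note each singleton can fail to see a branch set lying inside its two $G$-neighbours. The fix, and the paper's key lemma, is this: either $G\cong K_1+P_{2n-4}$ (the fan, which is a spanning subgraph of $W_{2n-3}$ and hence covered by your wheel case), or $G$ has two \emph{independent} $2$-chords $(v_{i-1},v_{i+1})$ and $(v_{j-1},v_{j+1})$. In the second alternative $v_i,v_j$ have disjoint $G$-neighbourhoods, so contracting $(v_i,v_j)$ in $\overline{G}$ produces a single vertex adjacent to all others; removing it leaves exactly $\overline{G-\{v_i,v_j\}}$, the complement of a maximal outerplanar graph on $2(n-1)-3$ vertices, and induction gives $K_{n-1}$, hence $K_n$ in $\overline{G}$. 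Contracting the pair rather than keeping two singletons is precisely what dissolves the obstacle you identified.
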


While this result does not hold for arbitrary planar graphs of order $2n-3$, we conjecture that the complements of planar graphs of order (at least) 11 are intrinsically knotted. 
An embedding of a graph in space is a \textit{knotted embedding} if there exists a cycle which forms a nontrivial knot. A graph $G$ is \textit{intrinsically knotted} (IK) if all embeddings of $G$ in $\mathbb{R}^3$ are knotted embeddings. The class of \textit{knotlessly embeddable} (nIK) graphs, consisting of graphs which are \textit{not} IK, is a minor-closed family of graphs. By the Robertson--Seymour theorem \cite{RS}, the class of nIK graphs possesses a forbidden minor characterization. While there are over 264 known minor minimal IK (MMIK) graphs \cite{GMN}, a complete list of forbidden minors is not yet available. 
Among those which are known are $K_7$, shown to be IK by the work of  Conway and Gordon \cite{CG}, and the complete 4-partite graph $K_{3, 3, 1, 1}$, shown to be MMIK by the work  of Foisy \cite{Foisy}. 
This means a graph  $G$ which contains  $K_7$ or $K_{1, 1, 3, 3}$ as a minor is IK. 
We investigate graphs of order 11 and show that the complement of non-separating planar graphs of order 11 must contain a $K_7$ minor.

\section{Notation and Definitions}

We denote the set of vertices and edges of a simple, undirected, and finite graph $G$ as $V(G)$ and $E(G)$, respectively. For each pair of vertices $u, v \in V(G)$ we say $(u, v) \in E(G)$ if and only if $u$ and $v$ are adjacent in $G$. If $(u, v) \in E(G)$, we say that the edge $(u,v)$ is \textit{incident} to both $u$ and $v$. The set of all vertices $u \in V(G)$ satisfying $(u, v) \in E(G)$ for a fixed vertex $v$ is the open neighbor set $N_G(v)$, or $N(v)$ when $G$ is clear from context. For each graph $G$, we denote its (edge) \textit{complement} graph $\overline{G} = (V(G),E^\neg(G))$, such that $$E^\neg(G) = \{ (u, v) | u, v \in V(G) \: \text{and} \: (u, v) \not \in E(G) \}.$$

\noindent 
If a graph  $G'$ is a subgraph of $G$ we write $G' \subseteq G$. 
We say a graph $H$ is a \textit{minor} of $G$, or $H \mleq G$, if there exists an isomorphism from a subdivision of $H$ to a subgraph $G' \subseteq G$. If a graph isomorphic to $H$ can be obtained by applying a (possibly empty) series of vertex deletions, edge deletions, and edge contractions to $G$, then also $H \mleq G$. 
The minor relation $\mleq$ is reflexive, anti-symmetric, and transitive. 
A graph $G$ is \textit{planar} when there exists an embedding of $G$ in $\mathbb{R}^2$ in which edges intersect only at points in $V(G)$.

We define the \textit{join} $H = H_1+H_2$ of two graphs $H_1$ and $H_2$ as follows. 
The set of vertices of $H$ is $V(H) = V(H_1) \sqcup V(H_2)$, the disjoint union of the sets of vertices of $H_1$ and $H_2$.
 In addition to existing adjacencies in $H_1$ and $H_2$, each vertex in $H$ corresponding to a vertex in $H_1$ is adjacent to every vertex arising from $H_2$, so that $E(H) = E(H_1) \sqcup E(H_2) \sqcup (V(H_1) \times V(H_2))$, where $V(H_1) \times V(H_2)$ is the Cartesian product of the vertex sets. For any $H' \cong H$, we say $H' = H_1 + H_2$.

\section{Maximal Non-Separating Planar Graphs}

We begin by noting that if $H$ is a minor of $G$ of the same order, $ \overline{G}$ is a subgraph of  $\overline{H}$. This allows us to only consider \textit{maximal} non-separating planar graphs of a given order.
We organize the proof of Theorem \ref{mainIK} following the classification of non-separating planar graphs by Dehkordi and Farr \cite{DF}.
\begin{theorem}[Dehkordi, Farr \cite{DF}]
A graph $G$ is maximal non-separating planar if and only if $G$ belongs to one of the following categories:
\begin{enumerate}
\item wheel graphs
\item elongated triangular prism graphs
\item maximal outerplanar graphs
\end{enumerate}
\label{numtheo}
\end{theorem}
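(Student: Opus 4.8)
Theorem~\ref{numtheo} is the classification of Dehkordi and Farr, so a complete argument is in \cite{DF}; here is how one would organize it. One checks the two implications separately.

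\emph{The three families consist of maximal non-separating planar graphs.} For a wheel $W_k$, with hub $h$ and rim $k$-cycle $v_1 v_2\cdots v_k$, use the standard drawing with $h$ at the centre. Every cycle of $W_k$ is either the rim, whose only off-cycle vertex is the interior point $h$, or a cycle through $h$; such a cycle consists of two spokes $h v_i, h v_j$ and one of the two rim arcs between $v_i$ and $v_j$, and then its off-cycle vertices are precisely those of the other rim arc, all lying on a single side of the cycle. Hence the drawing is non-separating. A maximal outerplanar graph in its convex drawing is non-separating because each of its cycles bounds a union of triangular faces, hence a vertex-free disk; the elongated triangular prism graphs are treated the same way using their standard ``two caps joined by a triangulated band'' drawing. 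One then verifies maximality family by family: for a wheel the only non-edges are rim chords, and adding $v_i v_j$ produces, together with a rim arc, a cycle which in \emph{every} planar embedding strictly separates $h$ from the remaining rim vertices --- a short check; the outerplanar and prism cases are analogous, the delicate point being to rule out the augmented graph having some \emph{other}, non-standard non-separating embedding.

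\emph{Every maximal non-separating planar graph lies in one of the three families.} Let $G$ be such a graph with a witnessing embedding $\mathcal{E}$. First reduce to the $2$-connected case: a cut vertex or a separating pair would permit a local edge-addition preserving a non-separating drawing, contradicting maximality. Then every face of $\mathcal{E}$ is bounded by a cycle, and the hypothesis reads: for every cycle $C$ of $\mathcal{E}$, one of the two open regions of $\R^2\setminus C$ contains no vertex of $G$. Writing $C_o$ for the boundary cycle of the outer face, the heart of the argument is a nesting-depth lemma --- somewhat delicate to state precisely --- roughly to the effect that the non-separating condition forbids more than one genuine layer of nesting among the cycles that carry vertices in their interiors. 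This forces $\mathcal{E}$ to be very flat: either all vertices of $G$ lie on $C_o$, or the vertices strictly inside $C_o$ induce an outerplanar ``core'' that sits, in a single nested layer, inside $C_o$.

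\emph{The maximality endgame.} In the first case $G$ is outerplanar, and edge-maximality leaves $G$ a maximal outerplanar graph (some care is needed here, as a maximal outerplanar graph can be a proper subgraph of a wheel or a prism of the same order). In the second case, applying the non-separating condition to the cycles that run along $C_o$ and through the core severely restricts both the size of the core and the way $C_o$ is joined to it; triangulating greedily --- legitimate by maximality --- while re-imposing the no-separation condition leaves exactly two possibilities: a one-vertex core adjacent to all of $C_o$, which gives a wheel, or a two-vertex core joined in the prescribed pattern, which gives an elongated triangular prism, with Euler's formula fixing the edge counts. I expect the main obstacle to be this structural endgame --- proving the nesting-depth lemma in a usable form and then excluding everything outside the three families, in particular managing the interface between the wheel and the elongated-triangular-prism cases. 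The verification that the three families are themselves non-separating and maximal is, by comparison, routine.
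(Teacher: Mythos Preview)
The paper does not supply its own proof of Theorem~\ref{numtheo}: the result is quoted from Dehkordi and Farr \cite{DF} and used as a black box for the rest of the article. You acknowledge this yourself in your opening line, so there is no in-paper argument to compare your sketch against.

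As a freestanding outline your sketch is reasonable in shape but, as you concede, it is only scaffolding: the ``nesting-depth lemma'' is never actually formulated, and the structural endgame is gestured at rather than executed. The parenthetical worry you raise is in fact a real obstruction that a complete proof must address --- for instance the fan $K_1+P_{n-1}$ is maximal outerplanar yet sits as a proper spanning subgraph of the non-separating planar graph $W_n=K_1+C_{n-1}$, so the statement cannot be read as ``every maximal outerplanar graph is maximal non-separating planar'' without some qualification, and your sketch does not indicate how this is reconciled. Anyone needing the full argument should consult \cite{DF} directly.
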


\begin{figure}[htpb!]
\begin{center}
\begin{picture}(390, 100)
\put(0,0){\includegraphics[width=5.4in]{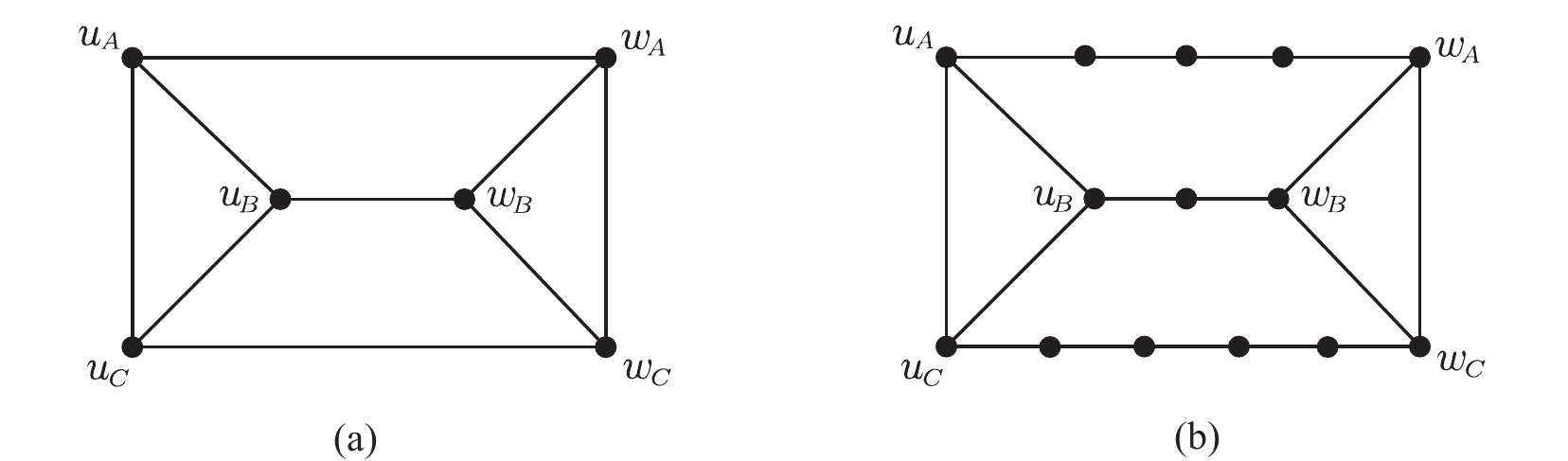}}
\end{picture}
\caption{\small (a) The triangular prism graph; (b) Elongated triangular prism.}
\label{prism}
\end{center}
\end{figure} 
\noindent For well-defined classes of graphs, the cardinality of the vertex set will be denoted in subscript. The \textit{wheel graph} $W_n$ on $n$ vertices (for $n\geq 4$) is isomorphic to the join of the cycle graph $C_{n-1}$ and a vertex; equivalently, $W_n = K_1 + C_{n-1}$. 
In what follows, we denote the vertices of $W_n$ by $v_1, v_2, \ldots , v_n$, where $v_1, v_2, \ldots , v_{n-1}$ represent the vertices of $C_{n-1}$ in clockwise order, and $v_n$ is adjacent to all $v_i$, $i=1,2,\ldots, n-1$.
An \textit{elongated triangular prism graph} is any graph constructed from the triangular prism graph  in Figure \ref{prism}(a) by consecutively subdividing any or all of the edges $(u_A,w_A)$, $(u_B,w_B)$, and $(u_C,w_C)$. An example is given in Figure \ref{prism}(b).
An outerplanar graph is a graph which has a planar embedding in which all vertices lie on a single face. 
A graph which is maximal with this property is \textit{maximal outerplanar}.
A maximal outerplanar graph can be represented by an $n-$cycle with its interior triangulated in the plane.
In the next three sections, we look at each type of maximal non-separating planar graphs.


\section{Wheel Graphs}

\begin{theorem}
The complement $\overline{W_{n}}$ of the wheel graph $W_{n}$ satisfies $K_{\lfloor{\frac{3(n-1)}{4}}\rfloor} \mleq \overline{W_{n}}$ for $n \geq 6$.
\label{theo:wheel}
\end{theorem}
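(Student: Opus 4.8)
The plan is to reduce everything to the complement of a cycle and then assemble the required clique minor from singletons together with a few well-separated pairs. Write $t=\lfloor 3(n-1)/4\rfloor$. Since $W_n=K_1+C_{n-1}$, taking complements turns the join into a disjoint union, so $\overline{W_n}=K_1\sqcup\overline{C_{n-1}}$; the isolated vertex plays no role in a minor, so it suffices to prove $K_t\mleq\overline{C_m}$ where $m=n-1\ge 5$. I would index the vertices of $C_m$ by $\Z/m\Z$, so that in $\overline{C_m}$ two vertices are adjacent exactly when they are \emph{not} consecutive on the cycle, and I will speak of the cyclic distance between two vertices.

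The next step is to record two elementary facts about potential branch sets in $\overline{C_m}$ (using $m\ge 5$). First, a single vertex is always a connected branch set, and a pair $\{a,b\}$ is a connected branch set precisely when $a$ and $b$ are at cyclic distance at least $2$. Second, and crucially, two disjoint branch sets of $\overline{C_m}$ are joined by an edge of $\overline{C_m}$ \emph{unless} one of them is a singleton $\{u\}$ and the other is exactly the pair $\{u-1,u+1\}$ --- this is immediate because every vertex of $C_m$ has only two cycle-neighbors. Consequently, two non-consecutive singletons are adjacent, a singleton and a pair at cyclic distance $\ge 3$ are adjacent, and any two pairs are adjacent.

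For the construction, let $I$ be a maximum independent set of $C_m$, so $|I|=\lfloor m/2\rfloor$, and take every vertex of $I$ as a singleton branch set. Put $J=(\Z/m\Z)\setminus I$, so $|J|=\lceil m/2\rceil$. The key point is that $J$ can be partitioned, up to one possible leftover vertex, into $\lfloor |J|/2\rfloor$ pairs each of cyclic distance at least $3$. Given such a partition, the first fact makes each pair a connected branch set, and the second fact makes all $\lfloor m/2\rfloor+\lfloor |J|/2\rfloor$ branch sets pairwise adjacent (distance $\ge 3$ simultaneously rules out a disconnected pair and the $\{u-1,u+1\}$ obstruction). A one-line check on the residue of $m$ modulo $4$ shows $\lfloor m/2\rfloor+\lfloor\lceil m/2\rceil/2\rfloor=\lfloor 3m/4\rfloor=t$, which finishes the argument.

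It remains to produce the pairing of $J$, and this is the only place requiring any care. When $m=2k$ is even, $J$ is the opposite color class; relabelling $J$ so that vertices at cyclic distance $2$ in $C_m$ become adjacent turns the task into finding a matching of $\overline{C_k}$ that leaves at most one vertex uncovered. When $m=2k+1$ is odd, $J$ has $k+1$ vertices and the same relabelling works, except that the unique distance-$1$ pair contained in $J$ supplies one extra forbidden pair, closing a path into a cycle and reducing the task to a near-perfect matching of $\overline{C_{k+1}}$. For every $N\ge 4$ the graph $\overline{C_N}$ does admit a matching leaving at most one vertex uncovered (one can exhibit such a matching explicitly), so this handles all $m\ge 7$; the two remaining cases $m\in\{5,6\}$, that is $n\in\{6,7\}$, are checked directly, $\overline{C_5}\cong C_5$ contracting onto $K_3$ and $\overline{C_6}$ being the triangular prism, which contracts onto $K_4$. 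I expect the genuine obstacle to be nothing deep --- just the bookkeeping in the mod-$4$ split and keeping the wrap-around parity straight in the odd case.
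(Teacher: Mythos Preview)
Your argument is correct and, at its core, is the same construction the paper uses: take the odd-indexed cycle vertices as singleton branch sets (a maximum independent set of $C_{n-1}$), then pair up the even-indexed vertices so that each pair is far enough apart on the cycle. The paper carries this out by writing down explicit contractions $(v_{2l},v_{2l+2t})$ in four cases according to $n\bmod 4$, whereas you abstract the pairing step into the existence of a near-perfect matching in $\overline{C_k}$ (resp.\ $\overline{C_{k+1}}$) and isolate the two adjacency lemmas (singleton--pair and pair--pair) once and for all. The content is the same; your presentation trades the paper's explicit formulas for a cleaner structural statement, at the cost of having to justify the matching claim and handle $m\in\{5,6\}$ separately.
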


\begin{proof}

We shall consider the remainder of $n$ modulo 4. 

For  $n=4t+1$, in $\overline{W_{n}}$, the vertices $v_1, v_3, ....,v_{4t-1}$ induce a complete subgraph on $2t$ vertices.
Contracting the edges $(v_{2l},v_{2l+2t})$ for $1\le l \le t$, produces a minor of $\overline{W_{n}}$ isomorphic to $K_{3t}$. See Figure \ref{wheelcomps}(c) for the case $n=13$, $t=3$.

\begin{figure}[htpb!]
\begin{center}
\begin{picture}(420, 150)
\put(-20,0){\includegraphics[width=6.5in]{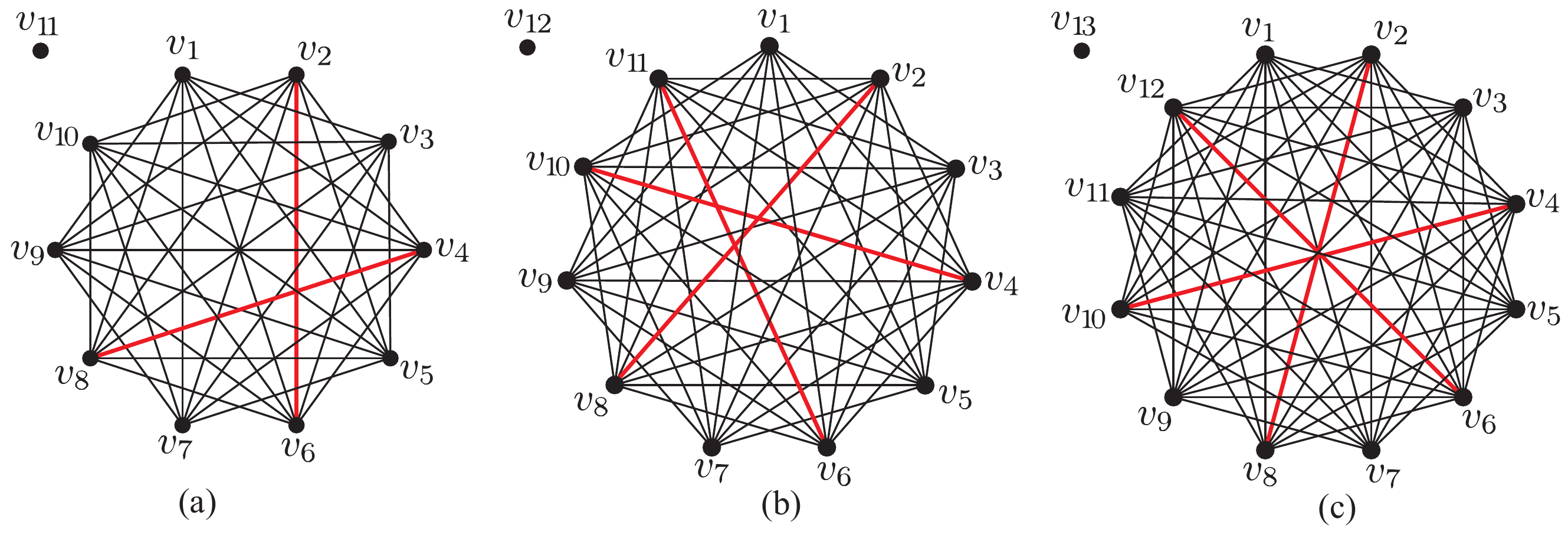}}
\end{picture}
\caption{\small (a) $\overline{W_{11}}$, contracting the highlighted edges gives a $K_7$ minor; (b) $\overline{W_{12}}$,  contracting the highlighted edges gives a $K_8$ minor; (b) $\overline{W_{13}}$,  contracting the highlighted edges gives a $K_9$ minor.}
\label{wheelcomps}
\end{center}
\end{figure} 

For $n=4t+2$, in $\overline{W_{n}}$, the vertices $v_1, v_3, ....,v_{4t-1}$ induce a complete subgraph. Contracting the edges $(v_{2l},v_{2l+2t})$ for $1\le l \le t$, and contracting any of the edges incident to $v_{4t+1}$, produces a minor of $\overline{W_{n}}$ isomorphic to $K_{3t}$. 

For $n=4t+3$, in $\overline{W_{n}}$, the vertices $v_1, v_3, ....,v_{4t+1}$ induce a complete subgraph of order $2t+1$. 
Contracting the edges $(v_{2l},v_{2l+2t})$ for $1\le l \le t$ and deleting the vertex $v_{4t+2}$  produces a minor of $\overline{W_{n}}$ isomorphic to $K_{3t+1}$. See Figure \ref{wheelcomps}(a) for the case $n=11$, $t=2$.

For $n=4t+4$, in $\overline{W_{n}}$, the vertices $v_1, v_3, ....,v_{4t+1}$ induce a complete subgraph of order $2t+1$. 
Contracting the edges $(v_{2l},v_{2l+2t+2})$ for $1\le l \le t$, and $(v_{2t+2},v_{4t+3})$, produces a minor of $\overline{W_{n}}$ isomorphic to $K_{3t+2}$. See Figure \ref{wheelcomps}(b) for the case $n=12$, $t=2$.
\end{proof}

%

\begin{corollary} 
\label{wheelgeneral1}
The complement $\overline{W_{2t-3}}$ of the wheel graph $W_{2t-3}$ satisfies $K_t \mleq \overline{W_{2t-3}}$ for $t \geq 6$.
\label{cor}
\end{corollary}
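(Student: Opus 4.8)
The plan is to deduce this directly from Theorem \ref{theo:wheel} by specializing $n = 2t-3$. Since $2t-3$ is odd, only the residues $1$ and $3$ modulo $4$ occur, so the quantity appearing in that theorem is $\lfloor 3(n-1)/4\rfloor = \lfloor 3(t-2)/2\rfloor = \lfloor (3t-6)/2\rfloor$. First I would check the hypothesis of Theorem \ref{theo:wheel}: for $t \ge 6$ we have $n = 2t-3 \ge 9 \ge 6$, so the theorem applies and yields $K_{\lfloor (3t-6)/2\rfloor} \mleq \overline{W_{2t-3}}$.

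Next I would verify the arithmetic inequality $\lfloor (3t-6)/2\rfloor \ge t$ for all $t \ge 6$, splitting on the parity of $t$: if $t$ is even then $\lfloor (3t-6)/2\rfloor = (3t-6)/2$, and $(3t-6)/2 \ge t \iff t \ge 6$; if $t$ is odd then $\lfloor (3t-6)/2\rfloor = (3t-7)/2$, and $(3t-7)/2 \ge t \iff t \ge 7$, which holds automatically since an odd integer $t \ge 6$ is in fact $\ge 7$. Hence $K_t$ is a subgraph, and therefore a minor, of $K_{\lfloor (3t-6)/2\rfloor}$, and by transitivity of the minor relation $K_t \mleq \overline{W_{2t-3}}$, as claimed.

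Alternatively — and this may be the more illuminating route for the later sections — one can unpack the two relevant constructions in the proof of Theorem \ref{theo:wheel} directly. Writing $2t-3 = 4s+1$ gives $t = 2s+2$ and an explicit $K_{3s}$ minor, while $2t-3 = 4s+3$ gives $t = 2s+3$ and an explicit $K_{3s+1}$ minor; one then checks $3s \ge 2s+2$ and $3s+1 \ge 2s+3$ respectively, each equivalent to $s \ge 2$, i.e. $t \ge 6$. Both approaches are entirely routine, so there is no genuine obstacle here; the only point requiring care is the interaction of the floor function with the parity of $t$, which is why I would spell out the even/odd split rather than leave it implicit.
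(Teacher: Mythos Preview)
Your proposal is correct and follows essentially the same approach as the paper: apply Theorem~\ref{theo:wheel} with $n=2t-3$, simplify $\lfloor 3(n-1)/4\rfloor$ to $\lfloor(3t-6)/2\rfloor=\lfloor 3t/2\rfloor-3$, and check that this is at least $t$ for $t\ge 6$. You are a bit more explicit than the paper in verifying the floor inequality via a parity split, but the argument is the same.
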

\begin{proof}
We apply Theorem \ref{theo:wheel} with $n = 2t -3$. 
We have as a result $$K_{\lfloor{\frac{3(2t-4)}{4}}\rfloor} =  K_{\lfloor{\frac{3t}{2}}\rfloor - 3} \mleq \overline{W_{2t-3}}.$$ 
For $t \geq 6$, we have $\lfloor{\frac{3t}{2}}\rfloor - 3 \geq t$ and thus $$K_t \mleq K_{\lfloor{\frac{3t}{2}}\rfloor - 3} \mleq \overline{W_{2t-3}}.$$
\end{proof}

The following theorem shows that the value $\lfloor{\frac{3(n-1)}{4}}\rfloor$ is the best possible.

\begin{theorem}
The edge complement $\overline{W_{n}}$ of the wheel graph $W_{n}$ has no minor isomorphic to $K_{\lfloor{\frac{3(n-1)}{4}}\rfloor + 1}$, equivalently $K_{\lfloor{\frac{3(n-1)}{4}}\rfloor + 1} \not \mleq \overline{W_{n}}$ for $n\geq 6$.
\label{sharpwheelK}
\end{theorem}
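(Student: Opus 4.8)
The plan is to establish an upper bound on the Hadwiger number of $\overline{W_n}$ that matches the lower bound from Theorem~\ref{theo:wheel}. Write $n-1 = m$, so the rim of the wheel is the cycle $C_m$ on $v_1,\dots,v_m$ and $v_n$ is the hub. In $\overline{W_n}$ the hub $v_n$ is isolated, so it contributes nothing to any minor; thus it suffices to bound the Hadwiger number of $\overline{C_m}$, the complement of the $m$-cycle, and show it is at most $\lfloor 3m/4\rfloor$. The key structural fact I would exploit is that $\overline{C_m}$ is a very sparse-complement graph: each vertex $v_i$ is non-adjacent in $\overline{C_m}$ only to $v_{i-1}$ and $v_{i+1}$ (indices mod $m$).

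The main step is a counting argument on the branch sets of a hypothetical $K_k$ minor. Suppose $\overline{C_m}$ has a $K_k$ minor, realized by $k$ pairwise-disjoint connected branch sets $B_1,\dots,B_k \subseteq V(C_m)$, with an edge of $\overline{C_m}$ between every pair $B_i, B_j$. The crucial observation is about ``small'' branch sets: if some $B_i = \{v_a\}$ is a singleton, then $v_a$ must have a neighbor (in $\overline{C_m}$) in every other branch set, but $v_a$'s \emph{non}-neighbors in $\overline{C_m}$ are only $v_{a-1}$ and $v_{a+1}$; hence at most two other branch sets can fail to contain a neighbor of $v_a$ — in fact every other branch set containing neither $v_{a-1}$ nor $v_{a+1}$ automatically has an edge to $v_a$. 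I would push this further: singleton branch sets $\{v_a\}$ and $\{v_b\}$ must be adjacent in $\overline{C_m}$, so no two singletons can be ``cycle-consecutive''. More importantly, I would analyze branch sets of size $1$ versus size $\ge 2$: if $s$ of the branch sets are singletons and the remaining $k-s$ have size $\ge 2$, then $m \ge s + 2(k-s) = 2k - s$, so $s \ge 2k - m$. On the other hand I claim the singletons cannot be too numerous relative to how they sit on the cycle: the $s$ singleton vertices, together with their cycle-neighbors, and the fact that consecutive vertices $v_a, v_{a+1}$ can never \emph{both} be singletons (they'd be non-adjacent in $\overline{C_m}$), forces the singletons to occupy an independent set in $C_m$, and a finer analysis of the ``gaps'' they leave — each gap between consecutive singletons along the cycle must absorb enough rim-vertices to feed the size-$\ge 2$ sets — yields $s \le m - 2(k-s)$ combined with a parity/packing refinement. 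Balancing $s \ge 2k-m$ against the packing constraint should collapse to $4k \le 3m + O(1)$ and, after checking the small residues of $m$ mod $4$ by hand, to $k \le \lfloor 3m/4 \rfloor = \lfloor 3(n-1)/4\rfloor$.

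An alternative and perhaps cleaner route, which I would pursue in parallel, is via an \emph{edge count}. The graph $\overline{C_m}$ has $\binom{m}{2} - m$ edges. A standard lemma states that if $H \mleq G$ then one can choose branch sets so that contracting them uses at most $|V(G)| - |V(H)|$ edges of $G$ for the contractions; more usefully, a $K_k$ minor with branch sets $B_1,\dots,B_k$ requires $\sum_i (|B_i| - 1)$ edges inside the branch sets plus $\binom{k}{2}$ crossing edges, all within $\overline{C_m}$. Since $\sum_i |B_i| \le m$ we get $\sum_i(|B_i|-1) \le m - k$, hence $\binom{k}{2} + (m-k) \ge$ (something), but this direction over-counts; instead I would use the contrapositive sparsity bound: in $\overline{C_m}$ every set of $j$ vertices spans at most $\binom{j}{2} - \lceil j/2 \rceil$ edges when $j \le m$ is taken ``around the cycle'' badly — more precisely, any induced subgraph on $j$ vertices of $\overline{C_m}$ misses at least $\lceil j/2 \rceil$ edges (since $C_m$ restricted to any $j$ vertices that are all mutually within the missing-edge pattern contains a matching of size $\ge$ roughly $j/2$ only in extreme cases; the correct statement is that $C_m$ on any $j$ vertices has at least $j - (m-j) \ge 2j - m$ edges when $2j > m$). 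This is exactly the kind of estimate that controls how ``complete'' a contracted quotient can be.

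The main obstacle, I expect, will be making the singleton/gap packing argument airtight for all four residue classes of $m \bmod 4$ simultaneously, and in particular handling the boundary cases where the extremal configuration from Theorem~\ref{theo:wheel} is tight — there the inequality $s \ge 2k - m$ must hold \emph{with equality} in a specific arrangement, and ruling out one extra branch set requires showing that adding it would force either two consecutive singletons or a branch set of size $\ge 2$ that cannot reach all others. I anticipate the cleanest formulation is: prove that any $K_k$-minor model in $\overline{C_m}$ has at least $\max(0, 2k - m)$ singleton branch sets and that the singleton vertices form an independent set in $C_m$ \emph{with no two of them at cycle-distance $2$ either} unless \dots — and then a short case check finishes it. If that packing refinement proves stubborn, the fallback is the edge-counting inequality above, sharpened by noting that $\overline{C_m}$ has independence number $2$ (so branch sets, being connected in $\overline{C_m}$, are quite flexible but the whole graph is dense), turning the problem into: a dense graph whose complement is a cycle cannot have too large a clique minor — a statement for which the $\binom{k}{2}+(m-k)\le |E(\overline{C_m})| = \binom m2 - m$ inequality already gives $\binom k2 \le \binom m2 - 2m + k$, i.e. roughly $k \le m - 2$, which is too weak, confirming that the structural (singleton-packing) argument, not mere edge counting, is the essential ingredient.
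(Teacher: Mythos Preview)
Your branch-set approach is genuinely different from the paper's, and in fact it is already complete---you just have not noticed. You observed that (i) any two singleton branch sets must be adjacent in $\overline{C_m}$, hence the singleton vertices form an independent set in $C_m$, giving $s \le \lfloor m/2\rfloor$; and (ii) the size count $\sum_i |B_i| \le m$ with $k-s$ sets of size $\ge 2$ gives $s \ge 2k - m$. Combining these yields $2k \le m + \lfloor m/2\rfloor = \lfloor 3m/2\rfloor$, and since $k$ is an integer this is exactly $k \le \lfloor 3m/4\rfloor = \lfloor 3(n-1)/4\rfloor$. No ``finer analysis of the gaps'', no parity refinement, and no case check on $m \bmod 4$ is needed beyond verifying the floor identity $\lfloor\lfloor 3m/2\rfloor/2\rfloor = \lfloor 3m/4\rfloor$. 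So the structural argument you outlined is both correct and considerably shorter than the paper's proof.

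The paper, by contrast, does use an edge-counting argument---so your final paragraph draws the wrong moral. Their refinement over the crude bound $\binom{k}{2} + (m-k) \le |E(\overline{C_m})|$ is to track edges lost \emph{sequentially}: starting from $H_0 = \overline{C_{n-1}}$, any two adjacent vertices share at least $n-7$ common neighbours, so the first contraction removes at least $n-6$ edges; each contraction decreases the minimum common-neighbour count by at most one, so the $k$th contraction removes at least $n-6-(k-1)$ edges. Summing over $\lfloor (n-2)/4\rfloor$ contractions and comparing with $\binom{\lfloor 3(n-1)/4\rfloor + 1}{2}$ (checked in four cases mod~$4$) gives the contradiction. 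What your approach buys is brevity and a structural explanation of the $3/4$ constant; what the paper's approach buys is a template that could in principle be applied to other dense near-regular graphs where the branch-set combinatorics is less transparent.
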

\begin{proof}
Assume on the contrary, $K_{\lfloor{\frac{3(n-1)}{4}}\rfloor + 1} \mleq \overline{W_{n}}$. 
The graph $\overline{W_{n}}$ contains an isolated vertex, $v_{n}$. Because $n\geq 1$, we have $K_{\lfloor{\frac{3(n-1)}{4}}\rfloor + 1} \mleq \overline{W_{n}} \iff K_{\lfloor{\frac{3(n-1)}{4}}\rfloor + 1} \mleq \overline{W_{n}}\backslash v_{n}$. 

As the minor is complete, it follows there is a sequence of $\lfloor{\frac{n-2}{4}}\rfloor$ edge contractions applied to $\overline{W_{n}}\backslash v_{n}$ which results in a graph isomorphic to $K_{\lfloor{\frac{3(n-1)}{4}}\rfloor + 1}$. 
Let us enumerate the intermediate graphs thus obtained by the sequence $(H_1, H_2, \dots, H_{\lfloor{\frac{n-2}{4}}\rfloor - 1})$ where $$\overline{W_{n}} \mgeq \overline{W_{n}}\backslash v_{n} \mgeq H_1 \mgeq H_2 \mgeq \cdots \mgeq H_{\lfloor{\frac{n-2}{4}}\rfloor - 1} \mgeq K_{\lfloor{\frac{3(n-1)}{4}}\rfloor + 1}.$$

In general, denoting $H_{-1} = \overline{W_{n}}$, $H_0 = \overline{W_{n}}\backslash v_{n}$, and $H_{\lfloor{\frac{n-2}{4}}\rfloor} = K_{\lfloor{\frac{3(n-1)}{4}}\rfloor + 1}$, we see $H_{k}$ is a graph obtained from $H_{k-1}$ (for $1 \leq k \leq n-1$) by applying exactly one edge contraction. We construct bounds on the number $|E({H_k})|$ of edges in each successive graph.

We have $\deg_{H_0}(v) = n-4$  for all $v \in V(H_0)$, that is $H_0$ is $(n-4)$-regular, and  $|E({H_0})| =  \frac{1}{2}\sum_{v\in V(H_0)} \deg_{H_0}(v)  = \frac{(n-1)(n-4)}{2}$. 
Moreover, for each edge $(u,v) \in E(H_0)$, the number of shared neighbors of $u$ and $v$ is $|N_{H_0}(u) \cap N_{H_0}(v)| \geq n-7$. 
This implies the \textit{minimum} number $L_0$ of edges lost by performing one edge contraction in $H_0$ satisfies 
$L_0 = \min_{(u,v)\in E(H_0)}|N_{H_0}(u) \cap N_{H_0}(v)|+1 \geq n-6$.
We define $L_k$ the minimum number of edges lost by performing one edge contraction in $H_k$.
 If we define a variable $$\Delta E_k = |E({H_{k-1}})| - |E({H_k})|$$ for the successive differences in edge set sizes, we see $\Delta E_k \ge L_k$.  

We note that each edge contraction decreases  the number of common neighbors (between two vertices) by at most one. 
In general, for $k \in [1, \lfloor{\frac{n-2}{4}}\rfloor]$ we have that if $L_{k-1} \geq \lambda$, then $ L_k \geq \lambda -1$, with $L_0\ge n-6$.
Then  $$\Delta E_k = (n - 6) - (k-1)= (n-5)-k.$$
 From this, we can evaluate $$|E({H_k})| = |E({H_0})| - \sum_{i = 1}^{k} \Delta E_i\le\frac{(n-1)(n-4)}{2}-k(n-5) +\frac{k(k+1)}{2},$$  for $k \in [1, \lfloor{\frac{n-2}{4}}\rfloor]$. 

In particular, this implies $|E(H_{\lfloor{\frac{n-2}{4}}\rfloor})| \leq \frac{n^2 - 5n + 4}{2} - \frac{(2n - 11 - \lfloor{\frac{n-2}{4}}\rfloor)(\lfloor{\frac{n-2}{4}}\rfloor)}{2}$, where $H_{\lfloor{\frac{n-2}{4}}\rfloor}$ is the graph resulting from the $\lfloor{\frac{n-2}{4}}\rfloor^{th}$ operation on $H_0$ and $|V(H_{\lfloor{\frac{n-2}{4}}\rfloor})| = \lfloor{\frac{3(n-1)}{4}}\rfloor + 1$. 
However, we have $|E(K_{\lfloor{\frac{3(n-1)}{4}}\rfloor + 1})| = \binom{\lfloor{\frac{3(n-1)}{4}}\rfloor + 1}{2}.$ 
By the calculations done in Table \ref{table1}, addressing all $n \pmod 4$, we see that   $$|E(H_{\lfloor{\frac{n-2}{4}}\rfloor})| < |E(K_{\lfloor{\frac{3(n-1)}{4}}\rfloor + 1})|.$$
This implies $ H_{\lfloor{\frac{n-2}{4}}\rfloor} \subsetneq K_{\lfloor{\frac{3(n-1)}{4}}\rfloor + 1},$  and we conclude  that  $K_{\lfloor{\frac{3(n-1)}{4}}\rfloor + 1} \not \mleq \overline{W_{n}}$.

\begin{table}[h]
\begin{tabular}{|c|c|c|c|c|}
\hline
$n $  & $\lfloor{\frac{n-2}{4}}\rfloor$ & $\lfloor{\frac{3(n-1)}{4}}\rfloor$ & $|E(H_{\lfloor{\frac{n-2}{4}}\rfloor})| <$ & $|E(K_{\lfloor{\frac{3(n-1)}{4}}\rfloor + 1})|$ \\ \hline
$4s$   & $s-1$                           & $3s-1$                             & $\frac{9s^2 - 3s - 6}{2}$                   & $\frac{9s^2 - 3s}{2}$                           \\ \hline
$4s+1$ & $s-1$                           & $3s$                               & $\frac{9s^2 + 3s - 8}{2}$                     & $\frac{9s^2 + 3s}{2}$                           \\ \hline
$4s+2$ & $s$                             & $3s$                               & $\frac{9s^2 + 3s - 2}{2}$                     & $\frac{9s^2 + 3s}{2}$                           \\ \hline
$4s+3$ & $s$                             & $3s+1$                             & $\frac{9s^2 + 9s - 2}{2}$                     & $\frac{9s^2 + 9s + 2}{2}$                       \\ \hline
\end{tabular}
\vspace{.1in}
\caption{\small Calculations show that  $|E(H_{\lfloor{\frac{n-2}{4}}\rfloor})| < |E(K_{\lfloor{\frac{3(n-1)}{4}}\rfloor + 1})|$ for all $n$ in the range.}
\label{table1}
\end{table}
\end{proof}

The \textit{Hadwiger number} of a graph, introduced by Hadwiger in 1943 \cite{H}, is defined to be the order of the largest complete minor of the graph. 
The following corollary of Theorems \ref{theo:wheel} and \ref{sharpwheelK} gives the Hadwiger number of complements of wheel graphs.

\begin{corollary} For $n\ge 6$, the Hadwiger number of $\overline{W_n}$ is $\lfloor{\frac{3(n-1)}{4}}\rfloor $.
\label{Hadwigerwheel}
\end{corollary}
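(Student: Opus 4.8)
The plan is simply to combine the two sharpness results already established for wheel-graph complements. Recall that the Hadwiger number of a graph $H$ is, by definition, the largest integer $m$ for which $K_m \mleq H$; so it suffices to bound this $m$ from below and from above for $H = \overline{W_n}$.

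For the lower bound I would invoke Theorem \ref{theo:wheel} verbatim: it asserts $K_{\lfloor{\frac{3(n-1)}{4}}\rfloor} \mleq \overline{W_n}$ for every $n \ge 6$, so the Hadwiger number of $\overline{W_n}$ is at least $\lfloor{\frac{3(n-1)}{4}}\rfloor$. For the matching upper bound I would appeal to Theorem \ref{sharpwheelK}, which states $K_{\lfloor{\frac{3(n-1)}{4}}\rfloor + 1} \not\mleq \overline{W_n}$ for $n \ge 6$. Since $K_m$ is a subgraph, hence a minor, of $K_{m'}$ whenever $m \le m'$, and $\mleq$ is transitive, no $K_m$ with $m \ge \lfloor{\frac{3(n-1)}{4}}\rfloor + 1$ can be a minor of $\overline{W_n}$ either; thus the Hadwiger number is strictly less than $\lfloor{\frac{3(n-1)}{4}}\rfloor + 1$, i.e. at most $\lfloor{\frac{3(n-1)}{4}}\rfloor$.

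Putting the two bounds together yields that the Hadwiger number of $\overline{W_n}$ is exactly $\lfloor{\frac{3(n-1)}{4}}\rfloor$ for all $n \ge 6$. There is no genuine obstacle: the full content of the corollary has been absorbed into Theorems \ref{theo:wheel} and \ref{sharpwheelK}, and the only auxiliary observation needed is the trivial monotonicity remark that forbidding $K_{m+1}$ as a minor forbids every larger complete graph as a minor as well.
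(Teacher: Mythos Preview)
Your argument is correct and matches the paper's approach exactly: the paper states this corollary without a separate proof, treating it as the immediate consequence of Theorems~\ref{theo:wheel} and~\ref{sharpwheelK}, which is precisely the combination you spell out.
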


\section{Elongated Triangular Prisms}

An elongated triangular prism graph  $G$ is  constructed from the triangular prism graph  in Figure \ref{prism}(a) by subdividing any or all of the edges $(u_A,w_A)$, $(u_B,w_B)$, and $(u_C,w_C)$. 
Note that in $G$, there exists exactly one path $P_i$ from $u_i$ to $w_i$ that does not contain an edge in either triangle  $u_Au_Bu_C$ or $w_Aw_Bw_C$, for $i\in \{A, B, C\}$.
 Moreover, the path $P_i$ is an induced subgraph of $G$.

%
%
\begin{lemma}
\label{prismcase}
For a graph $G$ of order 11, if $G$ is an elongated triangular prism, then $K_7 \mleq \overline{G}$.
\end{lemma}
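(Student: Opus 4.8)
The plan is to reduce to a finite case analysis on the shape of the elongated triangular prism $G$ of order $11$. Since $G$ is built from the triangular prism (which has $6$ vertices) by subdividing the three ``vertical'' edges $(u_A,w_A)$, $(u_B,w_B)$, $(u_C,w_C)$, an order-$11$ example corresponds to distributing $5$ subdivision vertices among the three paths $P_A,P_B,P_C$, i.e.\ choosing $(a,b,c)$ with $a+b+c=5$ and $a,b,c\ge 0$, where $P_i$ has $i+2$ vertices (its two endpoints $u_i,w_i$ plus interior subdivision vertices). Up to the obvious symmetries (permuting $A,B,C$ and swapping the two triangles), there are only a handful of such triples: $(5,0,0),(4,1,0),(3,2,0),(3,1,1),(2,2,1)$. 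I would treat these uniformly, and only split off separate small computations if the uniform argument fails on an extremal case.

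First I would record the structure of $\overline{G}$. The only edges of $G$ are: the two triangles $u_Au_Bu_C$ and $w_Aw_Bw_C$, and the edges along the three paths $P_i$. So in $\overline{G}$, each $u_i$ is non-adjacent only to $u_j$ ($j\ne i$) and to the first interior vertex of $P_i$; symmetrically for $w_i$; and an interior vertex of $P_i$ is non-adjacent only to its (one or two) neighbors along $P_i$. In particular $\overline{G}$ is very dense: a vertex of $G$ of degree $d$ becomes a vertex of $\overline{G}$ of degree $10-d$, and every vertex of $G$ has degree $2$, $3$, or $4$, so $\overline{G}$ has minimum degree $\ge 6$. The key observation is that since each $P_i$ is an induced path in $G$, its complement (within those $i+2$ vertices) is the complement of a path, which is Hamiltonian-connected and in fact contains a large clique minor; more usefully, the interior vertices of the paths together with the triangle vertices can be grouped into $7$ ``supernodes'' each of which is connected in $\overline{G}$ and pairwise joined.

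The concrete approach: partition $V(G)$ (the $11$ vertices) into $7$ blocks, each inducing a connected subgraph of $\overline{G}$, such that every pair of blocks has an edge of $\overline{G}$ between them; contracting each block then yields a $K_7$ minor. For the construction of the blocks I would pair up ``opposite'' vertices — e.g.\ a $u$-side vertex of one path with a $w$-side or interior vertex of a different path, which are adjacent in $\overline{G}$ because the only non-edges of $\overline{G}$ live within a single triangle or within a single path. Since $\overline{G}$ is so dense (only $3+3+(\text{path edges}) = 6 + (5+3)=$ at most $14$ non-edges total, versus $\binom{11}{2}=55$ edges), after contracting $4$ well-chosen edges the remaining $7$ blocks will almost automatically be pairwise adjacent; one just has to check that each chosen contraction edge is genuinely a non-edge of $G$, and that the at most $14$ non-edges of $\overline{G}$ do not all pile up between two fixed blocks. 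I expect this to go through with one generic choice of pairing for all balanced triples, and a slightly ad hoc pairing for the extreme case $(5,0,0)$, where one path $P_A$ is long (a $7$-vertex induced path) and the other two are single edges; there $\overline{P_A}$ alone already contains a $K_3$ minor on its interior, and combined with the $4$ remaining vertices one easily finds $K_7$.

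The main obstacle will be the extremal case $(5,0,0)$ (equivalently any case with a long path), because there the deficiency of $\overline{G}$ is concentrated: the five interior vertices of $P_A$ form a long induced path in $G$, so $\overline{G}$ restricted to them is only the complement of $P_5$, which is not complete, and the endpoints $u_A,w_A$ are isolated-ish (non-adjacent to their path-neighbors and to the other triangle vertices). I would handle it by contracting along $P_A$ carefully — contracting $(u_A, x)$ where $x$ is an interior vertex two steps away, which is a non-edge of $G$ — to build up connected blocks, while keeping $u_B,u_C,w_B,w_C$ as (or merged into) separate blocks; a direct check that the resulting $7$ blocks are pairwise joined finishes it. Once Lemma~\ref{prismcase} is in hand for order $11$, the general statement for larger elongated prisms of order $2n-3$ would follow by the same supernode-contraction idea with more vertices per block, but that is the business of a later section, not this lemma.
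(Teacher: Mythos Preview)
Your proposal is correct and takes essentially the same approach as the paper: enumerate the five isomorphism types of order-$11$ elongated triangular prisms via the partitions $(5,0,0),(4,1,0),(3,2,0),(3,1,1),(2,2,1)$ of $5$ into three parts, and for each exhibit four edge contractions in $\overline{G}$ (equivalently, a partition into seven pairwise-adjacent connected blocks) yielding a $K_7$ minor. The paper's proof is terser---it simply displays the five graphs in a figure with the four contraction edges marked and asserts the result---whereas you develop more structural intuition about $\overline{G}$ before arriving at the same finite verification; to complete your write-up you would just need to actually record the specific four contractions in each of the five cases and check them.
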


\begin{proof}
The graph  $G$ is obtained by subdividing the non-triangular edges of the prism graph five times. 
The 5 vertices are contained in the path $P_i$ for exactly one $i \in \{A, B, C\}$. 
In this way, the distinct graphs $G$ up to isomorphism correspond to partitions of five vertices into three indistinguishable, disjoint sets. This can be done in five ways, so there are five elongated triangular prism graphs $G$ of order 11 up to isomorphism. We name these $G_{5,0,0}, G_{4,1,0}, G_{3,2,0}, G_{3,1,1}, G_{2,2,1}$ where the subscript corresponds to the partition which gives rise to the respective graph. See Figure \ref{5prisms}.

\begin{figure}[htpb!]
\begin{center}
\begin{picture}(420, 285)
\put(0,0){\includegraphics[width=6in]{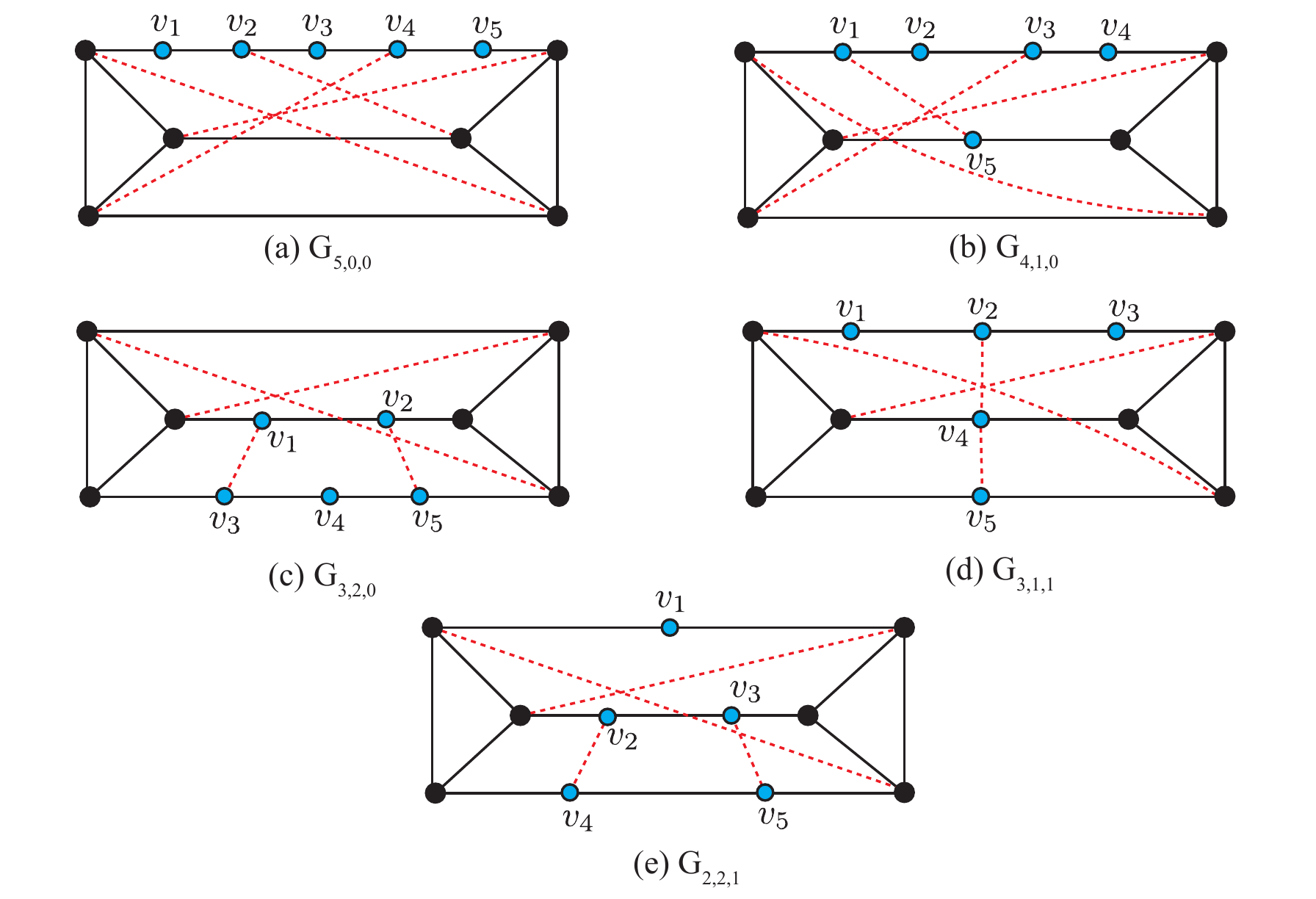}}
\end{picture}
\caption{\small The five elongated triangular prisms of order 11 up to isomorphism. The dotted edges belong to the complement graph.}
\label{5prisms}
\end{center}
\end{figure} 

A  sequence of four edge contractions performed on $\overline{G}$, as specified by the red, dotted edges in the figures, produces $K_7$ minors of $\overline{G}$ in all cases. 
\end{proof}

\begin{theorem}
\label{triangular_prism_general}  
Every elongated triangular prism graph $G$ on $|V(G)| = 2n-3$ vertices satisfies $K_n \mleq \overline{G}$ for $n \geq 7$.
\end{theorem}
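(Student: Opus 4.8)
The plan is to prove the theorem by induction on $n$, with the elongated prisms of order $11$ (the five graphs treated in Lemma~\ref{prismcase}) as the base case $n=7$. The naive inductive step — delete two subdivision vertices from a long path to pass from order $2n-3$ to order $2(n-1)-3=2n-5$, apply the hypothesis, and restore one new branch set — does not quite work: the two deleted vertices are adjacent in $G$, hence non-adjacent in $\overline{G}$, so they cannot be merged into a single new branch set, and a single one of them need not be $\overline{G}$-adjacent to every branch set of the smaller minor. I would repair this by carrying through a strengthened statement. Call the six vertices of $G$ incident with both a triangle and one of the three paths $P_A,P_B,P_C$ the \emph{corners} of $G$. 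The strengthened claim $S(n)$ is: \emph{every elongated triangular prism $G$ of order $2n-3$ admits a $K_n$ minor in $\overline{G}$ in which each of the six corners lies in a branch set of size at least $2$.}

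For the inductive step, fix $n\ge 8$ and assume $S(n-1)$. An elongated prism $G$ of order $2n-3$ has $2n-9\ge 7$ subdivision vertices spread over three paths, so some path $P$ contains at least two of them; write that end of $P$ as $v-p_1-p_2-\cdots$, where $v$ is the corner of $G$ there and $p_1,p_2$ are the first two subdivision vertices. Let $G'$ be obtained from $G$ by suppressing $p_1$ and $p_2$ (undoing two subdivisions at the $v$-end of $P$). Then $G'$ is an elongated prism of order $2n-5$ with the same six corners as $G$, and since passing from $G$ to $G'$ only adds the single edge joining $v$ to its new path-neighbour, $\overline{G'}$ is a spanning subgraph of $\overline{G}\backslash\{p_1,p_2\}$; in particular every minor model in $\overline{G'}$ is also one in $\overline{G}$ using only vertices of $V(G')$. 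Applying $S(n-1)$ to $G'$ gives a $K_{n-1}$ minor $M'$ of $\overline{G'}$ with every corner in a branch set of size $\ge 2$, so no branch set of $M'$ equals $\{v\}$. In $\overline{G}$ the only non-neighbour of $p_1$ lying in $V(G')$ is $v$, so the singleton $\{p_1\}$ is $\overline{G}$-adjacent to every branch set of $M'$; adjoining it produces a $K_n$ minor of $\overline{G}$ whose corner branch sets are exactly those of $M'$, still of size $\ge 2$. Hence $S(n)$ holds, and $S(n)$ trivially gives $K_n\mleq\overline{G}$.

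The base case $S(7)$ is where the work lies, and I expect it to be the main obstacle. For each of the five order-$11$ prisms I would exhibit an explicit $K_7$ minor whose three singleton branch sets are chosen among the five subdivision vertices, forcing all six corners into branch sets of size $2$. The idea is to pair corners using the $\overline{G}$-edges $u_iw_j$ with $i\ne j$, but this must be done carefully: the pairs must be pairwise $\overline{G}$-adjacent, and for some of these graphs the corners alone do not admit such a pairing — for instance in $G_{5,0,0}$ the paths $P_B,P_C$ are unsubdivided, so $\{u_B,u_C,w_B,w_C\}$ induces a $4$-cycle in $G$ and hence only a perfect matching in $\overline{G}$. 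In such cases one uses two corner-pairs together with two pairs each consisting of a corner and a subdivision vertex, leaving three subdivision vertices that are mutually non-adjacent in $G$ to serve as singleton branch sets. The delicacy is real: the clique-minor models implicit in Lemma~\ref{prismcase} need not put the corners in large branch sets, and the complement of the five-vertex path (which sits inside $\overline{G_{5,0,0}}$ and $\overline{G_{3,1,1}}$) has no $K_4$ minor, so the subdivision vertices cannot by themselves supply the required branch sets — the corners must be actively used. Verifying the five cases is finite and routine but has to be carried out explicitly.
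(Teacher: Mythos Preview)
Your inductive step is sound, and the strengthened hypothesis $S(n)$ does propagate correctly: once the corners are protected in branch sets of size $\ge 2$, adjoining the singleton $\{p_1\}$ works exactly as you describe. The base case $S(7)$ is also true (for $G_{5,0,0}$ one can take, e.g., $\{u_A,w_B\},\{u_C,w_A\},\{u_B,p_2\},\{w_C,p_4\},\{p_1\},\{p_3\},\{p_5\}$), though as you say it has to be checked for all five graphs and does not follow immediately from the contractions in Lemma~\ref{prismcase}.

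The paper, however, avoids the strengthened hypothesis entirely by choosing the two removed vertices more cleverly. Rather than taking two \emph{adjacent} subdivision vertices $p_1,p_2$ at one end of a path, it takes two \emph{non-adjacent} subdivision vertices whose $G$-neighbourhoods are disjoint: either one interior vertex from each of two distinct subdivided paths, or, when all subdivisions lie on a single path $P_A$, the first and last interior vertices $v_1,v_{2k-7}$ of that path (which are non-adjacent once $k\ge 7$). In either case the chosen pair $v_a,v_b$ satisfies $(v_a,v_b)\in E(\overline{G})$ and $N_G(v_a)\cap N_G(v_b)=\emptyset$, so contracting $(v_a,v_b)$ in $\overline{G}$ produces a vertex $v_0$ adjacent to \emph{everything}; deleting $v_0$ leaves a graph whose complement is a spanning subgraph of an elongated prism on $2k-3$ vertices, and the plain inductive hypothesis gives $K_k$ there, hence $K_{k+1}\mleq\overline{G}$.

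So the difference is this: you repair the ``wrong'' choice of deleted vertices by strengthening the induction and redoing the base case; the paper instead makes the ``right'' choice of vertices (disjoint $G$-neighbourhoods) so that the merged vertex is automatically universal and no strengthening is needed. Your route is correct but carries extra baggage; the paper's observation that one can always find such a disjoint-neighbourhood pair is the cleaner idea.
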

\begin{proof}
Using Lemma \ref{prismcase} as our base case, we proceed by induction on $n$. 
We assume any elongated triangular prism graph $G_0$ with $|V(G_0)| = 2k-3$ satisfies $K_k \mleq \overline{G_0}$ and show an arbitrary elongated triangular prism graph $G$ with $|V(G)| = 2k-1$ satisfies $K_{k+1} \mleq \overline{G}$. 
There are two cases to consider: either all subdivisions in $G$ of the triangular prism graph occur on one path $P_i$, or there are at least two paths in $G$ with subdivisions, namely we have $|V(P_i)|, |V(P_j)| \geq 3$ for some distinct $i,j \in \{A, B, C\}$.

To consider the first case, let us say without loss of generality $|V(P_A)| = 2k - 5$ so that $(u_B, w_B), (u_C, w_C) \in E(G)$. 
We label $V(P_A) = \{u_A, v_1, v_2, \dots, v_{2k-7}, w_A\}$ such that the vertices $v_1, v_2, \dots, v_{2k-7}$ occur in this order along $P_A$ from $u_A$ to $w_A$. 
Since $k \geq 7$, we know $(v_1, v_{2k-7}) \not \in E(G)$, so $(v_1, v_{2k-7}) \in E(\overline{G})$. Let $H \mleq \overline{G}$ be the minor obtained from $\overline{G}$ by contracting the edge $(v_1, v_{2k-7})$, and call the new vertex formed by this edge contraction $v_0$. Because $N_G(v_1) \cap N_G(v_{2k-7}) = \emptyset$, we have $N_{H}(v_0) = V(H) \backslash \{v_0\}$. Since $v_0$ is adjacent to every other vertex in $H$, we now consider $H\backslash v_0$. The complement $\overline{H\backslash v_0}$ of the graph is a spanning subgraph of some elongated prism graph $G_0$ with $|V(G_0)| = 2k-3$. The assumption $K_k \mleq \overline{G_0}$ implies $K_k \mleq \overline{\left(\overline{H\backslash v_0}\right)} \cong H\backslash v_0$. It follows $$\overline{G} \mgeq H \mgeq K_1 + K_k \cong K_{k+1}.$$

For the second case, assume without loss of generality $|V(P_A)|, |V(P_B)| \geq 3$. Let $v_a \in V(P_A)$ where $v_a \not \in \{u_A, w_A\}$ and $v_b \in V(P_B)$ where $v_b \not \in \{u_B, w_B\}$. 
We have  $ (v_a, v_b) \in E(\overline{G})$ and $N_G(v_a) \cap N_G(v_b) = \emptyset$.
 As before, let $H \mleq \overline{G}$ be the minor obtained from $\overline{G}$ by contracting $(v_a, v_b)$, and call the resulting vertex $v_0$. 
Then $N_{H}(v_0) = V(H) \backslash \{v_0\}$ and the complement $\overline{H\backslash v_0}$ of the graph is a spanning subgraph of some elongated prism graph $G_0$ with $|V(G_0)| = 2k-3$. 
As before,  $K_k \mleq \overline{G_0}$ implies $K_k \mleq  H\backslash v_0$ and  it follows that $$\overline{G} \mgeq H \mgeq K_1 + K_k \cong K_{k+1}.$$ 

In both cases, the inductive step follows and completes the proof.
\end{proof}

\section{Outerplanar Graphs}

For a maximal outerplanar graph $G$ on $n$ vertices consider the planar embedding in $\R^2$ which consists of a triangulated convex regular $n$-gon $C_{n} \subseteq G$.
See  Figure \ref{outplanar10}(a).
The graph $G$ is uniquely determined up to isomorphism by this embedding, and we do not distinguish between the two.
Label the vertices of $G$ by  $v_1, v_2, \dots, v_n$,  in the order they  appear in $C_n$.  
The endpoints of any edge of $G$ which is not an edge of $C_n$ determine two paths along $C_n$. 
If the length of the shorter of the two paths is $i$, the edge is an $i$-\textit{chord} in $G$.

\begin{lemma}
\label{outerplanarcase}
For $G$  a maximal outerplanar graph of order 11,  $K_7 \mleq \overline{G}$.
\end{lemma}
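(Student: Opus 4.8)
The plan is to reduce the general maximal outerplanar graph of order 11 to a manageable list of cases and, in each case, exhibit four explicit edge contractions in $\overline{G}$ producing a $K_7$ minor, mirroring the strategy used for the elongated prisms in Lemma \ref{prismcase}. The key structural fact I would exploit is that in $\overline{G}$ two vertices $v_i, v_j$ are adjacent precisely when $(v_i,v_j)$ is neither an edge of the bounding cycle $C_{11}$ nor a chord of the triangulation; since a maximal outerplanar graph on $11$ vertices has $2\cdot 11 - 3 = 19$ edges, $\overline{G}$ has $\binom{11}{2} - 19 = 36$ edges, so $\overline{G}$ is quite dense and there is a lot of room. Moreover, when we contract an edge $(v_i,v_j)$ of $\overline{G}$ with $N_G(v_i)\cap N_G(v_j)=\emptyset$, the new vertex is adjacent in the contraction to \emph{every} remaining vertex, which is exactly the free $K_1+(\,\cdot\,)$ step; so the real content is to find enough such "disjoint-neighborhood" pairs.

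First I would set up the case analysis. A maximal outerplanar graph corresponds to a triangulation of an $11$-gon, and these are counted by the Catalan number $C_9 = 4862$ as labelled triangulations, but only finitely many up to the dihedral symmetry of $C_{11}$ — still too many to list by hand. So instead I would organize by the \emph{ear structure}: every maximal outerplanar graph on $n\ge 4$ vertices has at least two ears (vertices of degree $2$), i.e. vertices $v$ with $N_G(v)=\{v_{i-1},v_{i+1}\}$ and $(v_{i-1},v_{i+1})\in E(G)$ a chord. Pick an ear $v$; its two neighbors on $C_{11}$ are its only $G$-neighbors, so in $\overline{G}$ the vertex $v$ is adjacent to the other $8$ vertices. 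The idea is to contract $v$ into a suitable far-away vertex $w$ with $N_G(v)\cap N_G(w)=\emptyset$ (possible since $v$ has only two neighbors and $G$ is sparse), which produces a universal vertex and leaves $\overline{G\setminus v}$, a spanning subgraph of the complement of a maximal outerplanar graph on $10$ vertices. Then I would invoke (or re-derive, since the excerpt states only the order-$10$ case is handled in \cite{PP} for knotting, not for $K_6$ minors) a statement that complements of maximal outerplanar graphs of order $10$ contain $K_6$; if that is not already available I would instead directly iterate the ear-contraction down to order $7$ or $8$ and finish by hand. Repeatedly using $\overline{G} \succcurlyeq K_1 + \overline{G\setminus v}$ gives a chain $\overline{G}\succcurlyeq K_1 + K_6 \cong K_7$.

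The main obstacle is the bookkeeping in the inductive/reduction step: I must ensure that at each stage the ear vertex $v$ can be contracted into some vertex $w$ whose $G$-neighborhood is disjoint from $\{v_{i-1},v_{i+1}\}$, and that after deleting $v$ the remaining graph is still (a subgraph of) a maximal outerplanar graph on one fewer vertex — deleting a degree-$2$ ear from a triangulated $n$-gon does yield a triangulated $(n-1)$-gon, so this is clean, but I need $n-1\ge 7$ worth of vertices left over to land on something containing $K_6$, and I need to verify the disjoint-neighborhood condition cannot fail. The condition $N_G(v)\cap N_G(w)=\emptyset$ for the ear $v$ fails only if every vertex of $G$ outside $\{v,v_{i-1},v_{i+1}\}$ is adjacent to at least one of $v_{i-1},v_{i+1}$; a short degree-count (the two chord-endpoints have total degree at most $10$ in a graph with $19$ edges, and they share the edge $(v_{i-1},v_{i+1})$ and both see $v$) shows this cannot cover all $8$ other vertices when $n=11$, so a valid $w$ always exists. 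I would write out this counting argument carefully, then present the finitely many genuinely small base configurations (maximal outerplanar graphs on $7$ vertices, of which there are only a handful up to symmetry) with their explicit $K_7$-minor contraction sequences, and the proof of Lemma \ref{outerplanarcase} follows; the accompanying figure would display the dotted complement edges to be contracted, as in Figure \ref{5prisms}.
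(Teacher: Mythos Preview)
Your proposal tries to prove the base case by a further ear--contraction induction down to small orders, but this route breaks in at least two places.

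First, the degree count is wrong. You assert that for an ear $v$ with $N_G(v)=\{v_{i-1},v_{i+1}\}$ the two chord endpoints have total degree at most $10$, and hence cannot cover the remaining eight vertices. In the fan $G\cong K_1+P_{10}$ (which \emph{is} maximal outerplanar of order $11$), the ear $v_2$ has $N_G(v_2)=\{v_1,v_3\}$ with $\deg_G(v_1)=10$ and $\deg_G(v_3)=3$, so the total degree is $13$; in fact every vertex $w\neq v_1,v_2,v_3$ is adjacent to $v_1$, so no $w$ with $N_G(v)\cap N_G(w)=\emptyset$ exists at all. Thus your ``a valid $w$ always exists'' step fails, and the fan must be handled separately.

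Second, the proposed base of the recursion is false. Pushing the argument down, you would eventually need that complements of maximal outerplanar graphs of order $7$ contain $K_5$; but for $G=K_1+P_6$ one has $\overline{G}=\{v\}\sqcup\overline{P_6}$, and a direct edge count shows $K_5\not\preccurlyeq\overline{P_6}$ (any single contraction in the $10$--edge graph $\overline{P_6}$ loses at least two edges, while any vertex deletion loses at least three). So the recursion cannot bottom out at order $7$; the best you could hope for is to stop at order $9$, which would itself require a direct case analysis of comparable difficulty to order $11$.

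There is also a bookkeeping slip: contracting $(v,w)$ in $\overline{G}$ leaves a universal vertex together with $\overline{G}[V\setminus\{v,w\}]=\overline{G-v-w}$ on \emph{nine} vertices, not $\overline{G\setminus v}$ on ten; and $G-v-w$ need not be maximal outerplanar unless $w$ is also an ear.

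It is worth noting that your instinct is exactly the one the paper uses for the \emph{inductive step} (Theorem~\ref{maxoutergeneral}): there one contracts a pair of \emph{independent ears} (equivalently, independent $2$--chords), which simultaneously guarantees disjoint $G$--neighbourhoods and that the deletion of both ears yields a maximal outerplanar graph of order two less. Lemma~\ref{classmaxouter} shows such a pair exists unless $G\cong K_1+P_{n-1}$, the fan, which is then handled via the wheel result. But precisely because the pattern $K_n\preccurlyeq\overline{G}$ for $|V(G)|=2n-3$ fails below $n=6$ (and is delicate at $n=6$), the paper treats $n=7$ as the genuine base case and proves Lemma~\ref{outerplanarcase} by a direct argument: split on whether $G$ has a $5$--chord, and in each case exhibit explicit contractions (using, in the $5$--chord case, the $K_{4,5}$ in $\overline{G}$ straddling the chord). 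Your reduction cannot replace this direct analysis.
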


\begin{proof} Consider $G$ an outerplanar graph labeled as above.
The chords of $G$ can be $i$-chords, for $i=2,3,4,5$.
 If $G$ has no 5-chord, up to a rotation, the graph $G$ contains a subgraph isomorphic to the graph $H$ pictured in Figure \ref{maxoutplanar}(a). 
Contracting the edges $(v_1,v_7)$, $(v_2,v_8)$, $(v_3,v_9)$, and $(v_5,v_{11})$ in $\overline{G}$ creates a $K_7$ minor.

\begin{figure}[htpb!]
\begin{center}
\begin{picture}(400, 120)
\put(-50,0){\includegraphics[width=7in]{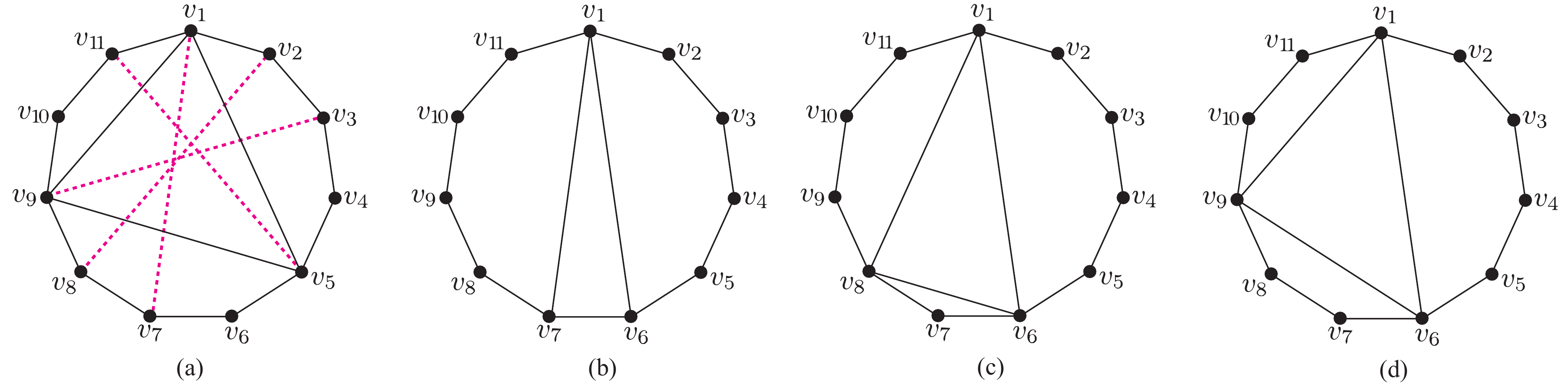}}
\end{picture}
\caption{\small (a)  $H$ is a  subgraph of $G$. Dotted edges are part of the complement $\overline{G}$ (b) Case 1: $(v_1,v_7)\in E(G)$. (c) Case 2:  $(v_1,v_8), (v_6,v_8)\in E(G)$.   (d) Case 3:   $(v_1,v_9), (v_6,v_9)\in E(G).$}
\label{maxoutplanar}
\end{center}
\end{figure} 

If $G$ has a 5-chord, say $(v_1,v_6)$, then the complement graph $\overline{G}$ contains as a subgraph a complete bipartite graph $K_{4,5}$ with vertex partitions $A=\{v_2, v_3, v_4, v_5\}$ and $B=\{v_7, v_8, v_9, v_{10}, v_{11}\}$.
We observe that at least  one of the edges $(v_2,v_4)$ and $(v_3,v_5)$ belongs to $\overline{G}$. 
Assume  $(v_2,v_4)\in E(\overline{G})$. 
In $\overline{G}$, contracting edges $(v_3,v_k)$ and $(v_5,v_j)$, where $k,j\in \{7, 8, 9, 10, 11\}, k\ne j$, and deleting the vertices $v_1$ and $v_6$  yields a minor isomorphic to $K_7$  minus a triangle rooted in the vertices in $B$.
The missing triangle is $v_pv_qv_r$ with $\{p, q, r\}= \{7, 8, 9, 10, 11\}\setminus\{k,j\}$.
To show that $\overline{G}$ contains a $K_7$ minor, it suffices to show that within the subgraph induced by $\{v_1, v_6, v_7, v_8, v_9, v_{10}, v_{11}\}$ there either exists a triangle with vertex set in $B$ or edges incident to $v_1$ or $v_6$ can be contracted to give such a triangle. 
The chord $(v_1,v_6)$ is part of a triangle with the third vertex in $B$. Up to symmetry, we distinguish three cases.\\
Case 1: $(v_1,v_7)\in E(G)$. See  Figure \ref{maxoutplanar}(b). Then $(v_6,v_8), (v_6,v_9), (v_6,v_{10}), (v_6,v_{11})\in E(\overline{G})$. At least  one of the two edges $(v_9,v_{11})$ and $(v_8,v_{10})$ belongs to $E(\overline{G})$. 
  If $(v_8,v_{10}) \in E(\overline{G})$, contract $(v_6,v_9)$ to obtain the triangle $v_8v_9v_{10}$.
If $(v_9,v_{11}) \in E(\overline{G})$, contract $(v_6,v_{10})$ to obtain the triangle $v_6v_9v_{10}$.\\
Case 2:  $(v_1,v_8), (v_6,v_8)\in E(G)$.  See  Figure \ref{maxoutplanar}(c). Then $(v_6,v_9), (v_6,v_{10}), (v_7,v_9), (v_7,v_{10})\in E(\overline{G})$. 
Contract the edge $(v_6,v_{10})$ to obtain the triangle $v_7v_9v_{10}$.\\
Case 3:   $(v_1,v_9), (v_6,v_9)\in E(G)$. See  Figure \ref{maxoutplanar}(d). Then  $(v_6,v_{10}), (v_6,v_{11}), (v_7,v_{10}), (v_7,v_{11})\in E(\overline{G})$. 
Contract the edge $(v_6,v_{10})$ to obtain the triangle $v_7v_{10}v_{11}$.\\

\end{proof}

To tackle the general case, we consider the relative position of chords in  maximal outerplanar graphs in Lemma \ref{classmaxouter}. 
We say two edges $e_1, e_2 \in E(G)$ are \textit{independent} if and only if $V(e_1) \cap V(e_2) = \emptyset$. 
We start by remarking that ever maximal outerplanar graph with $n\ge 4$ has at least one 2-chord.

\begin{lemma}
\label{classmaxouter}
For a maximal outerplanar graph $G$ of order $n \geq 7$, exactly one of the following is true:
\begin{enumerate}
    \item \label{itm:twoind} there exist a pair of \textit{independent} edges $e_1, e_2 \in E(G)$ such that both $e_1$ and $e_2$ are 2-chords of $G$.
    \item \label{itm:pickle} $G$ is isomorphic to $K_1 + P_{n-1}$
\end{enumerate}
\label{indedges}
\end{lemma}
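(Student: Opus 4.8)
The plan is to show that if a maximal outerplanar graph $G$ of order $n \geq 7$ fails to have two independent $2$-chords, then it must be the ``fan'' $K_1 + P_{n-1}$, and conversely that the fan has no two independent $2$-chords, so the two alternatives are mutually exclusive. I would work with the standard triangulated-polygon picture: vertices $v_1, \dots, v_n$ in cyclic order along $C_n$, with $n-3$ non-crossing chords cutting the $n$-gon into $n-2$ triangles. A $2$-chord is a chord $(v_{i-1}, v_{i+1})$, which corresponds exactly to an ``ear'' of the triangulation — a triangle using two consecutive polygon edges $(v_{i-1},v_i)$ and $(v_i,v_{i+1})$, equivalently a vertex $v_i$ of degree $2$ in $G$. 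It is a classical fact (which I would state with a one-line induction argument) that every triangulation of an $n$-gon with $n \geq 4$ has at least two ears; so $G$ always has at least two $2$-chords, and the only obstruction to getting an \emph{independent} pair is that all the ears cluster together.

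First I would set up the dual tree $T$ of the triangulation: vertices are the $n-2$ triangles, edges join triangles sharing a chord. Then $T$ is a tree with $n-2$ vertices, its leaves are exactly the ears, and it has maximum degree $\leq 3$. Two ears $v_i$ and $v_j$ give independent $2$-chords unless $\{i-1,i,i+1\}$ and $\{j-1,j,j+1\}$ overlap, i.e. unless $j \in \{i-2,i-1,i,i+1,i+2\}$; since $v_i,v_j$ both have degree $2$ we cannot have $j=i\pm 1$, so the only bad case is $j = i\pm 2$, meaning $v_{i-1}$ or $v_{i+1}$ is itself the apex of both ears. Thus: \emph{$G$ has no two independent $2$-chords iff every pair of ears shares a common apex vertex}. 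If there are $\geq 3$ ears this forces all of them to share one vertex $w$, which then is adjacent to every other vertex (its link is a path covering all remaining vertices), giving $G = K_1 + P_{n-1}$. If there are exactly $2$ ears, the dual tree $T$ is a path; I would show a path dual tree of length $n-2 \geq 3$ forces the two ears to be ``far apart'' unless $n-2 \le 2$, i.e. $n \le 4$, contradicting $n \ge 7$ — more carefully, with a path dual tree the two ear triangles are at the two ends, and walking from one end to the other the apex vertex changes, so the two $2$-chords are independent. Hence exactly $2$ ears is impossible for $n \geq 7$ without producing an independent pair, and we land in case (1).

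To organize this cleanly I would instead argue directly on the polygon rather than via the dual tree: let $v_i$ be an ear with the \emph{minimum} possible ``other'' ear constraint. Take any ear $v_i$; consider the chord $(v_{i-1}, v_{i+1})$, which splits off triangle $v_{i-1}v_iv_{i+1}$ and leaves a triangulated polygon $G'$ on $V(G) \setminus \{v_i\}$ with $n-1 \geq 6 \geq 4$ vertices. By the two-ears fact, $G'$ has an ear $v_j$ with apex not equal to the new boundary — if $v_j \notin \{v_{i-1}, v_{i+1}\}$ and $v_j$'s neighbors in $G'$ avoid... — actually the cleanest route is: $G'$ has at least two ears, and at most two of the ears of $G'$ can fail to be ears of $G$ (only $v_{i-1}$ and $v_{i+1}$ are candidates for gaining/losing ear status). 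So if $G'$ has $\geq 3$ ears, $G$ has an ear $v_j \notin \{v_{i-1}, v_i, v_{i+1}\}$ with $(v_{j-1},v_{j+1})$ a $2$-chord of $G$, and checking that $\{i-1,i,i+1\} \cap \{j-1,j,j+1\} = \emptyset$ unless $v_j$ is adjacent to $v_i$'s ear — one more short case check handles $j = i \pm 2$. Iterating, the only way to keep failing is for $G'$ (after removing one ear at a time) to always have exactly two ears sharing an apex, which pins the structure down to the fan.

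The main obstacle I expect is the bookkeeping in the ``exactly two ears'' situation: ruling out the configuration where the two $2$-chords share a vertex (so $G$ looks like two ears glued at a common apex plus stuff in between) for all $n \geq 7$, and making sure the induction/peeling does not secretly re-use an ear. I would handle this by the explicit observation that if the two $2$-chords are $(v_{i-1},v_{i+1})$ and $(v_{i-1},v_{i+3})$ (the bad overlapping case, apex $v_{i-1}$ forced since $v_i,v_{i+2}$ have degree $2$), then $v_{i-1}$ is the apex of $\geq 2$ ears; removing $v_i$ and $v_{i+2}$ leaves an $(n-2)$-gon triangulation on $\geq 5$ vertices which again has $\geq 2$ ears, and one of those ears, being neither $v_{i-1}$ nor adjacent to the removed region, yields a $2$-chord independent from $(v_{i-1},v_{i+1})$ — unless it too hangs off $v_{i-1}$, in which case continuing this process exhausts the polygon and shows $v_{i-1}$ is universal, i.e. $G = K_1 + P_{n-1}$. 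The mutual exclusivity (that the fan genuinely has \emph{no} two independent $2$-chords) is then the easy direction: in $K_1 + P_{n-1}$ the only degree-$2$ vertices are the two endpoints of $P_{n-1}$, their $2$-chords are $(w, x_2)$ and $(w, x_{n-2})$ where $w$ is the cone point, and these share $w$.
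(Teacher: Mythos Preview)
Your approach via ears and the dual tree is genuinely different from the paper's. The paper instead picks a chord of \emph{maximal} length $(v_i,v_{i+k})$ and splits into two cases: either some chord is independent of it (then a descent on each side locates independent $2$-chords), or every chord meets $\{v_i,v_{i+k}\}$ (then further case analysis forces $K_1+P_{n-1}$). Your ear-based reduction ``no two independent $2$-chords $\Rightarrow$ all ears pairwise at cyclic distance $2$ $\Rightarrow$ at most two ears when $n\ge 7$'' is cleaner and more conceptual for anyone familiar with polygon triangulations.

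That said, there is a real gap in the two-ear case. Your paragraph-2 claim that a path dual tree forces the two end $2$-chords to be independent is false: the fan $K_1+P_{n-1}$ itself has exactly two ears, a path dual tree, and its two $2$-chords share the cone vertex. So ``exactly two ears is impossible for $n\ge 7$ without producing an independent pair'' is wrong --- the fan is precisely the exception you are trying to isolate, and this line of argument cannot distinguish it. Your paragraph-4 repair has the right instinct but is garbled: with ears $v_i,v_{i+2}$ the $2$-chords are $(v_{i-1},v_{i+1})$ and $(v_{i+1},v_{i+3})$, so the shared apex is $v_{i+1}$, not $v_{i-1}$, and ``$(v_{i-1},v_{i+3})$'' is not a $2$-chord. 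More seriously, after deleting $v_i$ and $v_{i+2}$ every vertex outside $\{v_{i-1},v_{i+1},v_{i+3}\}$ keeps its old degree $\ge 3$, so the new ears \emph{must} lie in that set; you will never find an ear ``neither $v_{i-1}$ nor adjacent to the removed region'' as you hope, and the iteration has to stay local rather than escape.

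A clean fix along your lines: with exactly two ears the dual tree is a path $T_1,\dots,T_{n-2}$ with leaves $T_1=v_{i-1}v_iv_{i+1}$ and $T_{n-2}=v_{i+1}v_{i+2}v_{i+3}$. The triangle $T_2$ shares the chord $(v_{i-1},v_{i+1})$ with $T_1$ and, having exactly two chord-sides, has its third vertex a boundary neighbour of $v_{i-1}$ or $v_{i+1}$; it cannot be $v_{i+2}$ (degree $2$), so it is $v_{i-2}$ and the next chord is $(v_{i+1},v_{i-2})$. Inductively every $T_j$ contains $v_{i+1}$ (the alternative at each step would make $v_{i+2}$ have degree $>2$), so $v_{i+1}$ is universal and $G\cong K_1+P_{n-1}$. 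With this in place your argument is complete and, to my taste, more transparent than the paper's.
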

\begin{proof}

Let $v_1, v_2, \ldots , v_n$ be the vertices of $G$ in the order in which they appear in $C_n$, the boundary of the common face.
Let $k$ be the maximal length of all the chords of $G$, and let $(v_i,v_{i+k})$ be a $k$-chord of $G$.
(a) If there exist a chord $(v_j,v_{j+l})$ independent of $(v_i,v_{i+k})$, we may assume the vertices $v_i, v_{i+k}, v_j, v_{j+l}$ appear in this order along $C_n$. See Figure \ref{fig-chords}(a).
The subgraph $H_1$ of $G$ induced by $\{v_i, v_{i+1}, \ldots, v_{i+k}\}$ has a 2-chord.
If this edge is a 2-chord of $G$, call it $e_1$. 
Else, we may assume this chord is $(v_i,v_{i+k-1})$.
The subgraph  $H_2$ induced by $\{v_i, v_{i+1}, \ldots, v_{i+k-1}\}$ has a 2-chord.
If this edge is a 2-chord of $G$, call it $e_1$. 
Else, consider $H_3$ the subgraph obtained from $H_2$ by deleting the vertex $v\in \{v_i, v_{i+k-1}\}$, which is not an endpoint of the 2-chord. 
Continue the process until a 2-chord $e_1$ of $G$ is found.
The latest this chord could be found is in  a subgraph induced by four consecutive vertices of $H_1$.
Similarly, find a 2-chord in the subgraph of $G$ induced by $\{v_j, v_{j+1}, \ldots, v_{j+l}\}$ and call it $e_2$. 
Then, $e_1$ and $e_2$ are independent 2-chords of $G$. \\
(b) If every chord of $G$ shares an endpoint with $(v_i,v_{i+k})$,
let $H_1$ be a subgraph of $G$ induced by $\{v_i, v_{i+1}, \ldots, v_{i+k}\}$. 
Without loss of generality, we may assume that there exists $t$ such that $(v_i,v_t)\in E(H_1)$. 
See Figure \ref{fig-chords}(b).
Let $H_2$ be the subgraph of $G$ induced by $V(G) \setminus\{v_{i+1}, v_{i+2}, \ldots, v_{i+k-1}\}$. 
If there exists a chord $(v_{i+k},v_s)\in E(H_2)$, then $(v_i,v_t)$ and $(v_{i+k},v_s)$ are independent chords. 
By part (a), $G$ has two independent 2-chords.
If such a chord does not exist, all chords of $H_2$ meet at $v_i$.
If all chords of $H_1$ meet at $v_i$, then $G\cong K_1+P_{n-1}$.
Else, a chord of $H_2$ incident to $v_i$ is independent of a chord of $H_1$ incident to $v_{i+k}$ and by part (a), $G$ has two independent 2-chords.

\begin{figure}[htpb!]
\begin{center}
\begin{picture}(200, 125)
\put(-50,0){\includegraphics[width=4in]{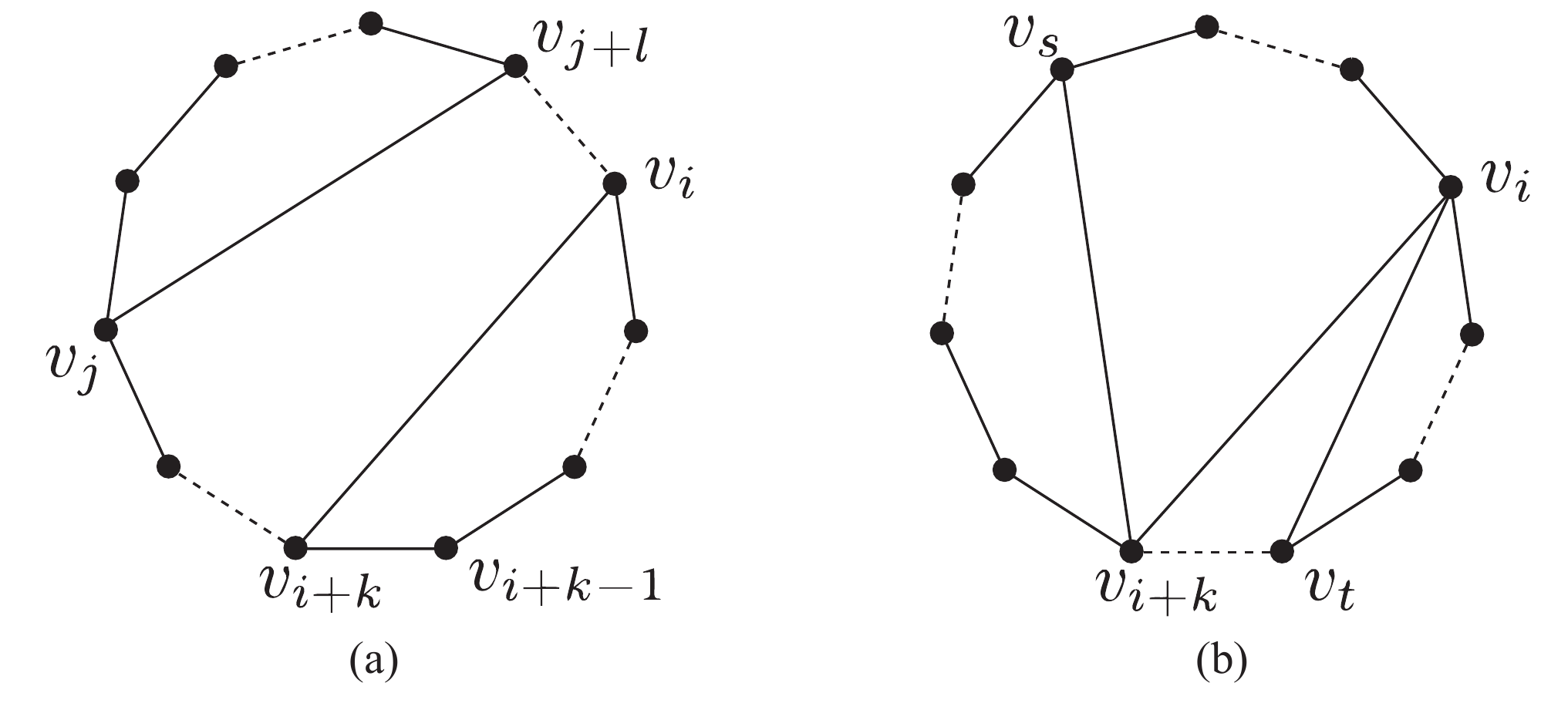}}
\end{picture}
\caption{\small (a) $G$ has independent chords $(v_i,v_{i+k})$ and $(v_j,v_{j+l})$; (b)  all the chords of $G$ share a vertex with $(v_i,v_{i+k})$.}
\label{fig-chords}
\end{center}
\end{figure}

\end{proof}

\begin{theorem}
\label{maxoutergeneral}
Every maximal outerplanar graph $G$ of order $|V(G)| = 2n-3$, $n\geq 7$,  satisfies $K_n \mleq \overline{G}$.
\end{theorem}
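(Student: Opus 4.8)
The plan is to argue by induction on $n$, taking Lemma~\ref{outerplanarcase} as the base case $n=7$ (order $11$). For the inductive step, assume every maximal outerplanar graph of order $2k-3$ has complement containing a $K_k$ minor, and let $G$ be a maximal outerplanar graph with $|V(G)| = 2k-1$; we must produce a $K_{k+1}$ minor in $\overline{G}$. Lemma~\ref{classmaxouter} splits this into two cases: either $G$ has two independent $2$-chords, or $G \cong K_1 + P_{2k-2}$. The first case will be handled by the same contraction device used in Theorem~\ref{triangular_prism_general}; the second, corresponding to the one outerplanar graph of this order with a universal vertex, needs a separate explicit construction.

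\emph{Case 1: $G$ has independent $2$-chords $(v_i,v_{i+2})$ and $(v_j,v_{j+2})$.} Put $a = v_{i+1}$ and $b = v_{j+1}$, the two vertices these chords cut off. In a triangulated polygon a vertex cut off by a $2$-chord is an ear of degree $2$, so $N_G(a) = \{v_i,v_{i+2}\}$ and $N_G(b) = \{v_j,v_{j+2}\}$, which are disjoint since the chords are independent. Moreover $a$ and $b$ are non-adjacent: if $b$ were one of $v_i, v_{i+2}$, the two $2$-chords would cut off ears sharing a boundary edge of the polygon, contradicting that each boundary edge of a triangulated polygon lies in a unique triangle. Hence $(a,b) \in E(\overline{G})$ and $N_G(a) \cap N_G(b) = \emptyset$. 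Exactly as in Theorem~\ref{triangular_prism_general}, contracting $(a,b)$ in $\overline{G}$ yields a minor $H$ whose new vertex $v_0$ is adjacent to all of $V(H)\setminus\{v_0\}$, while $\overline{H\setminus v_0} = G\setminus\{a,b\}$ is an outerplanar graph of order $2k-3$, hence a spanning subgraph of some maximal outerplanar graph $G_0$ of that order. The inductive hypothesis gives $K_k \mleq \overline{G_0} \subseteq H\setminus v_0$, so $\overline{G} \mgeq H \mgeq K_1 + K_k \cong K_{k+1}$.

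\emph{Case 2: $G \cong K_1 + P_{2k-2}$.} Here $\overline{G}$ is the disjoint union of an isolated vertex and $\overline{P_{2k-2}}$, so it suffices to exhibit a $K_{k+1}$ minor in $\overline{P_{2k-2}}$. Label the path $u_1 u_2 \cdots u_{2k-2}$, so that $u_p \sim u_q$ in $\overline{P_{2k-2}}$ precisely when $|p-q| \ge 2$. The $k-1$ odd-indexed vertices $u_1, u_3, \dots, u_{2k-3}$ induce a clique; I will adjoin the two further branch sets $A_1 = \{u_2, u_{2k-2}\}$ and $A_2 = \{u_4, u_6, \dots, u_{2k-4}\}$. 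Any two even-indexed vertices differ by at least $2$, so each of $A_1, A_2$ induces a clique in $\overline{P_{2k-2}}$ and $A_1$ is joined to $A_2$ (e.g. $u_2 \sim u_4$). Finally, an odd-indexed vertex $u_{2\ell-1}$ is non-adjacent only to $u_{2\ell-2}$ and $u_{2\ell}$; since $A_1$ is contained in no such pair (using $k \ge 4$) and $A_2$ has at least three elements (using $k \ge 6$), each of $A_1, A_2$ sends an edge to every odd-indexed vertex. These $k+1$ pairwise-adjacent connected branch sets give $K_{k+1} \mleq \overline{P_{2k-2}}$, completing the induction.

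The genuine obstacle is Case~2: the fan $K_1 + P_{2k-2}$ has a universal vertex, hence no pair of non-adjacent vertices with disjoint neighbourhoods, so the inductive contraction that works everywhere else fails outright and must be replaced by the hands-on minor construction above. The other point needing care is the non-adjacency of the two ears in Case~1, which is precisely where one uses that in a triangulated polygon every boundary edge belongs to a unique triangle.
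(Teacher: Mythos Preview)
Your proof is correct, and the inductive framework together with Case~1 matches the paper's argument essentially verbatim (you supply a little more detail on why the two ear vertices are non-adjacent, which the paper leaves implicit). The genuine difference is Case~2. The paper does not build a minor in $\overline{P_{2k-2}}$ by hand; instead it observes that $K_1+P_{2k-2}$ is a spanning subgraph of the wheel $W_{2k-1}=K_1+C_{2k-2}$, so $\overline{W_{2k-1}}\subseteq\overline{G}$, and then invokes Corollary~\ref{cor} to obtain $K_{k+1}\mleq\overline{W_{2k-1}}\mleq\overline{G}$. Your explicit branch-set construction in $\overline{P_{2k-2}}$ is a valid alternative: it makes the outerplanar section self-contained and independent of the wheel analysis, at the cost of a short ad hoc argument. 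The paper's route is shorter and reuses machinery already in place, but relies on the earlier section.
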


\begin{proof}
We use induction on the number of vertices $n\ge 7$. The base case, $n=7$ is provided by Lemma \ref{outerplanarcase}.
Let $G$ denote an outerplanar graph with $2(n+1)-3 = 2n-1$ vertices. 
By Lemma \ref{indedges}, we need to consider two cases.
\begin{enumerate}
\item There exists a pair of independent 2-chords $(v_{i-1}, v_{i+1})$ and $(v_{j-1}, v_{j+1})$.
Then $N_G(v_i)\cap N_G(v_j)=\emptyset$ and $H:= G-\{v_i, v_j\}$ is a maximal outerplanar graph  of order $2n-3$.
In turn  $N_{\overline{G}}(v_i) \cup N_{\overline{G}}(v_j) = V(G)$.
By the induction hypothesis, $ \overline{H}$ contains a complete minor of order $n$.
Contracting the edge $(v_i, v_j)$ in $\overline{G}$ yields a minor isomorphic to $H+K_1$ which has a complete minor of order $n+1$.
\item If $G$ is isomorphic to $K_1 + P_{2n-4}$,  then $G$ is a spanning subgraph of $W_{2n-3}$. 
By Corollary \ref{cor}, $K_n \mleq \overline{W_{2n-3}}$, and thus $K_n \mleq \overline{G}$.

\end{enumerate}

\end{proof}

Complements of maximal outerplanar graphs with $2n-4$ vertices do not necessarily have  a $K_n$ minor, as the following example illustrates.

\begin{example}
There exists a maximal outerplanar  graph $M$ of order $10$ for which $K_7 \not \mleq \overline{M}$.
\label{examplemaxoutpla10}
\end{example}

\begin{figure}[htpb!]
\begin{center}
\begin{picture}(330, 120)
\put(0,0){\includegraphics[width=5in]{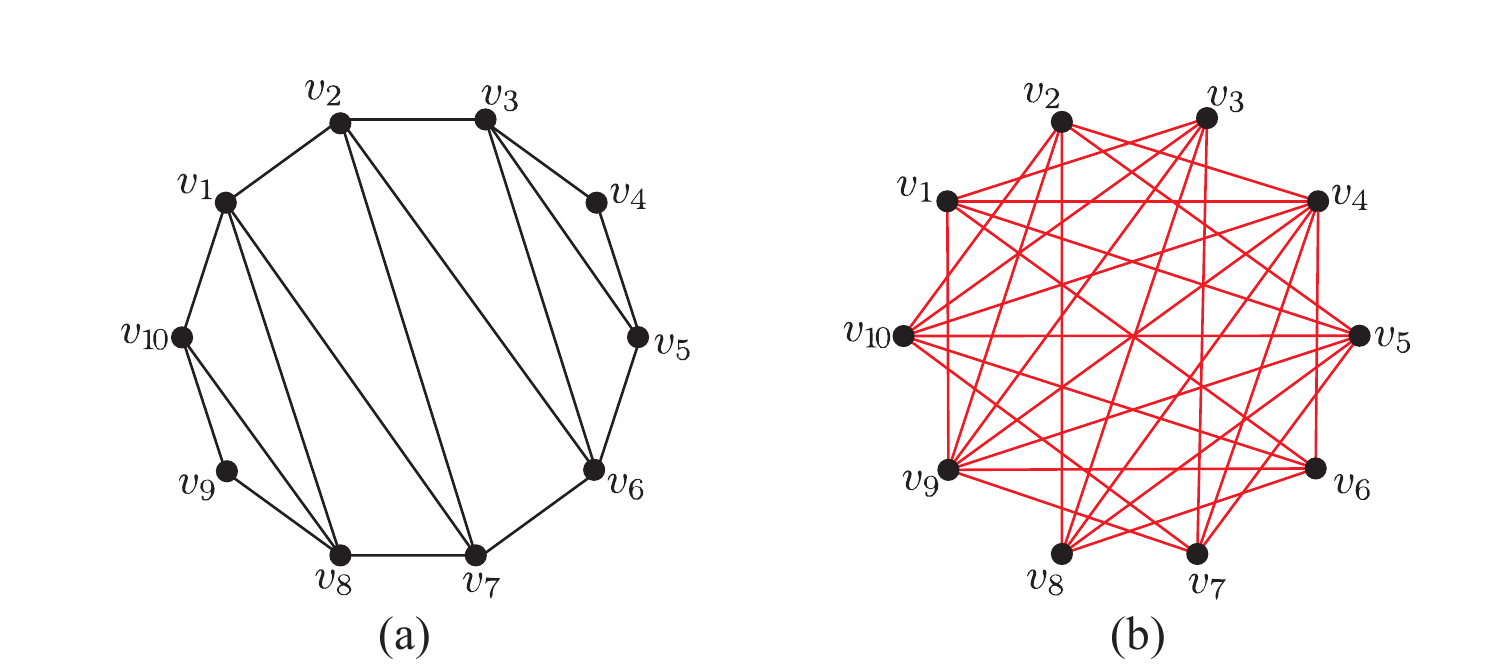}}
\end{picture}
\caption{ \small (a) Graph $M$ in Example \ref{examplemaxoutpla10} ; (b) Graph $\overline{M}$ in Example \ref{examplemaxoutpla10}.}
\label{outplanar10}
\end{center}
\end{figure} 

\begin{proof}
Consider the graph $M$ and its complement $\overline{M}$ as shown and labeled in Figure \ref{outplanar10}. 
We see that in  $\overline{M}$ the vertices  $v_1, v_2, v_3, v_6, v_7, v_8$ all have degree at most 5. 
This implies $K_7 \not \subseteq \overline{M}$. 
If $\overline{M}$ contains a $K_7$ minor, this  minor is  obtained from $\overline{M}$ by  three edge contractions. 
Under an edge contraction, the degree of a vertex not incident to the  contracted  edge is non-increasing. 
The number of vertices of degree at least six  increases by at most 1 after each edge contraction, and this occurs only if (but not necessarily) the endpoints of the contracted edge both have degree less than six. 
The vertices $v_1, v_2, v_3, v_6, v_7, v_8$ of $G$ all have degree less than six.
To achieve a $K_7$ minor, all endpoints of the three contracted edges must belong to this set, and the only possibility is contracting the edges $(v_1, v_6)$, $(v_2, v_8),$ and   $(v_3, v_7)$.
By inspection, the  graph obtained by contracting these three edges is not complete and so  $\overline{M}\not \mgeq K_{7}$.
\end{proof}


\section{Future Explorations}
It is worth noting that the main result of this article is specific to \textit{non-separating} planar graphs. 
Consider the maximal planar graph $G$ with eleven vertices in Figure \ref{planarcomplement}(a) and its complement  $\overline{G}$ in Figure \ref{planarcomplement}(b). 
The graph $\overline{G}$ has edge set $\{(v_1,v_2),(v_1,v_3),(v_1,v_4),(v_1,v_5),(v_1,v_6), \\(v_1,v_7),( 
v_1 ,v_8), ( v_2 , v_4), ( v_2 , v_7),( 
 v_2 , v_{11}),( v_3 , v_6),( 
v_3 , v_{10}),$ $( v_3 , v_{11}),( 
 v_4 , v_6),( v_4 , v_7),( v_4 , v_8),\\( 
 v_5 , v_8),( v_5 , v_9), ( v_5 , v_{11}),(
  v_6 , v_9),( v_6 , v_{10}),( 
 v_6 , v_{11}),( v_7 , v_9),( 
 v_7 , v_{10}),( v_7 , v_{11}),( 
 v_8 , v_9),( v_8 , v_{10}),\\( 
 v_8 , v_{11})\}$.
 We show  $\overline{G}$ does not have a $K_7$-minor. 
Since $\deg_{\overline{G}}(v_2)=4$, to obtain a $K_7$ minor of $\overline{G}$ an edge incident to the vertex $v_2$ needs to be contracted.
Since the edges $(v_1,v_4)$, $(v_4,v_7)$ and $(v_1,v_7)$ are contained in the neighborhood of $v_2$, if follows that $\overline{G}$ has a $K_7$ minor if and only if the graph $\overline{H}$ obtained by contracting the edge $(v_2,v_{11})$ of $\overline{G}$  has a $K_7$ minor. See Figure \ref{10contraction}.

\begin{figure}[htpb!]
\begin{center}
\begin{picture}(420, 150)
\put(0,0){\includegraphics[width=6in]{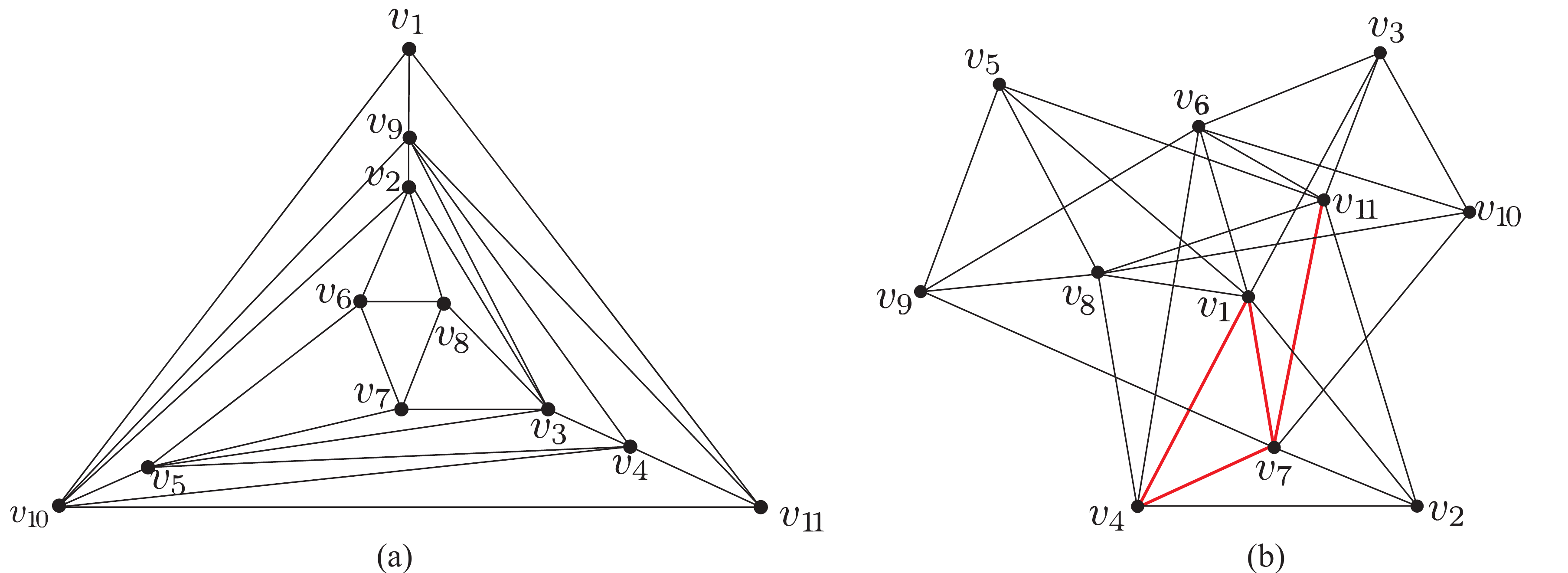}}
\end{picture}
\caption{\small (a) A maximal planar graph $G$ of order 11 (b) The graph $\overline{G}$, the complement of a maximal planar graph of order 11.}
\label{planarcomplement}
\end{center}
\end{figure}

 \begin{figure}[htpb!]
\begin{center}
\begin{picture}(350, 180)
\put(0,0){\includegraphics[width=4.5in]{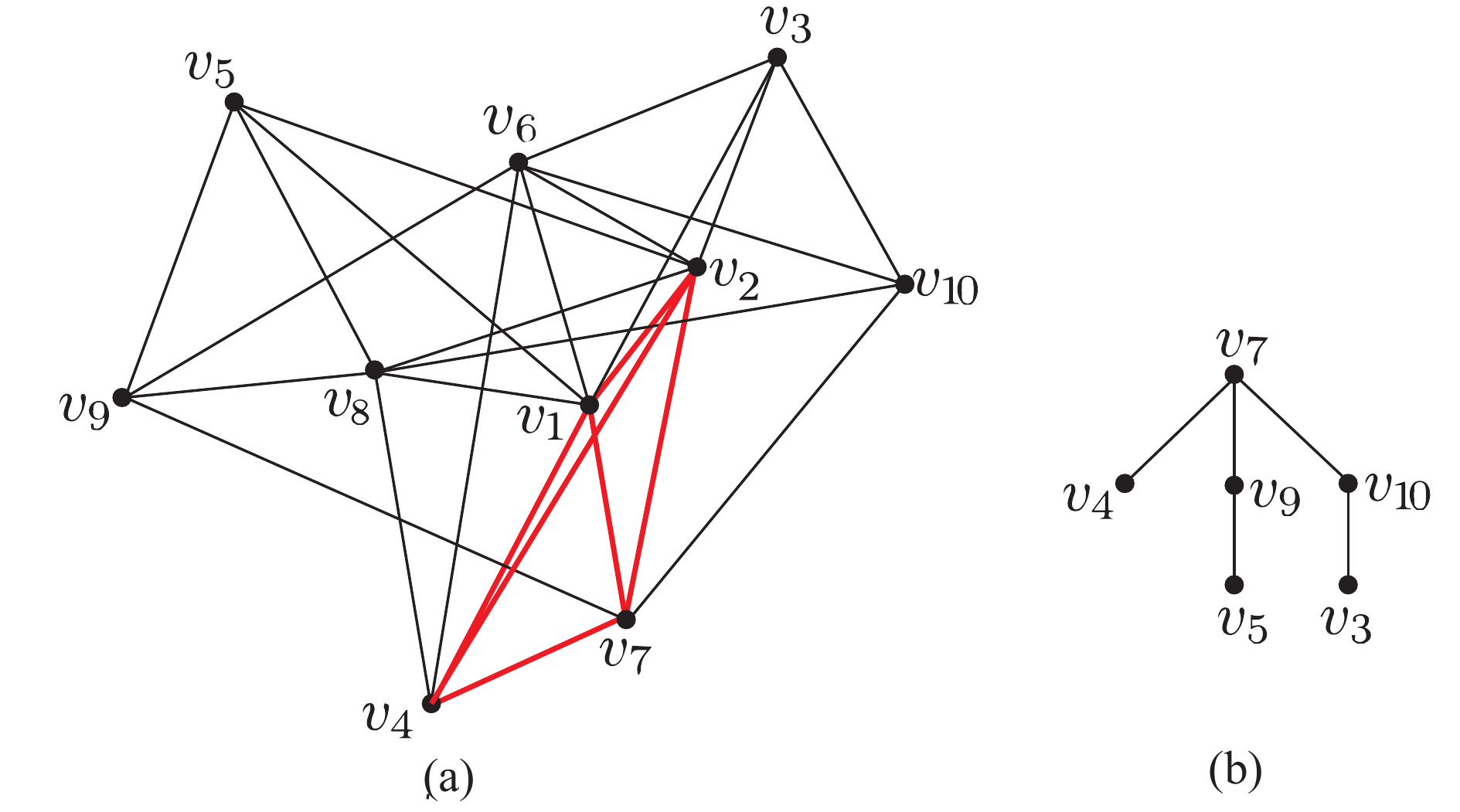}}
\end{picture}
\caption{(a) The graph $\overline{H}$  obtained by contracting the edge $(v_2,v_{11})$ of $\overline{G}$ (b) The graph induced in $\overline{H}$ by $\{v_3, v_4, v_5, v_7, v_9 , v_{10}\}$. }
\label{10contraction}
\end{center}
\end{figure} 
 Note that in $\overline{H}$, the vertices $v_3, v_4, v_5, v_7, v_9$, $v_{10}$ each have degree less than 6. 
 If a $K_7$ minor of $\overline{H}$ exists, it is obtained by contracting three edges of $\overline{H}$.
 Vertices $v_3, v_4, v_5, v_7, v_9$, $v_{10}$ all must be incident to the contracted edges.
  However, since the subgraph of $\overline{H}$ induced by $\{v_3, v_4, v_5, v_7, v_9 , v_{10}\}$ is a tree with leaves $v_3, v_4, v_5$,  it must be that the edges to be contracted are $(v_3, v_{10})$, $(v_4,v_7)$, and $(v_5,v_9)$. See Figure \ref{10contraction}(b).
   Contracting these edges, however, produces a minor of order 7 and size 19, thus not isomorphic to $K_7$.
 
  On the other hand, contracting the edges $(v_2,v_{11})$, $(v_3,v_{10})$, and $(v_5,v_9)$ in $G$, yields a minor isomorphic to $K_{3,3,1,1}$, which is a minor minimal intrinsically knotted graph, by the work of Foisy \cite{Foisy}.  It follows that the graph $G$ is intrinsically knotted. 
 
 More can be said about the complements of planar graphs of order 11: they are all intrinsically knotted. Independently, Foisy \cite{Foisy} and Taniyama and Yasuhara \cite{TY} provided a sufficient condition for a graph to be intrinsically knotted. Namely, if every embedding of the graph contains a double-linked $D_4$-minor (a graph of order 4 with a set of 4 double edges, which form a set of two pairs of linked cycles), then the graph is intrinsically linked. Based on these results, Miller and Naimi \cite{MN} developed and algorithm which checks whether a given graph has a double-linked $D_4$-minor in every embedding. Naimi implemented the algorithm into a Mathematica program. We used this program to confirm that each of the complements of the 1249 maximal planar graphs of order 11 is intrinsically knotted. Thus:

\begin{proposition} The complement of a planar graph of order 11 is intrinsically knotted. 
\label{conjecture1}
\end{proposition}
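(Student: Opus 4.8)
The plan is to reduce the proposition to a finite, computer-assisted verification over the maximal planar graphs of order $11$. The first step is the observation already used throughout this paper: if $G \subseteq G'$ then $\overline{G'} \subseteq \overline{G}$, so $\overline{G'}$ is a spanning subgraph, hence a minor, of $\overline{G}$. Every planar graph $G$ on $11$ vertices can be extended, by adding edges while preserving planarity, to a maximal planar graph $G'$ on the same vertex set (a triangulation of $S^2$). Since the class of knotlessly embeddable graphs is minor-closed, intrinsic knottedness passes from $\overline{G'}$ to $\overline{G}$. Therefore it suffices to prove that $\overline{G'}$ is intrinsically knotted for each maximal planar graph $G'$ of order $11$.

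The second step is to enumerate the relevant graphs. The combinatorial triangulations of the sphere on $11$ vertices are classically known and number exactly $1249$; this is the list referred to in the statement. Thus the proposition is equivalent to the assertion that each of these $1249$ specific graphs on $11$ vertices has an intrinsically knotted complement. Note that one cannot hope to argue uniformly via a $K_7$ minor here: the maximal planar graph $G$ displayed in Figure~\ref{planarcomplement} already has $\overline{G} \not\mgeq K_7$, so a more refined sufficient condition for intrinsic knottedness is needed.

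The third step supplies that condition. By a result proved independently by Foisy \cite{Foisy} and by Taniyama and Yasuhara \cite{TY}, a graph all of whose spatial embeddings contain a double-linked $D_4$ minor is intrinsically knotted. Miller and Naimi \cite{MN} converted this into an algorithm that tests, for a fixed graph, whether every embedding contains such a minor, and Naimi implemented it in Mathematica. I would run this implementation on each of the $1249$ complement graphs and record that it returns a positive answer in every case; together with the first two steps this completes the proof.

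The main obstacle, and the reason the argument is necessarily computational rather than conceptual, is twofold. First, the double-linked $D_4$ criterion is only sufficient: a priori some of the $1249$ complements could be intrinsically knotted without admitting a forced double-linked $D_4$ minor, which would leave the algorithm inconclusive on those graphs; the substance of the computation is precisely the empirical fact that this never occurs in this range. Second, the proof inherits the usual caveats of computer assistance: it depends on the correctness of the triangulation census on $11$ vertices and on the correctness of the Miller--Naimi algorithm and Naimi's implementation of it. Beyond verifying these inputs, there is no further mathematical content to extract.
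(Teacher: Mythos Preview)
Your proposal is correct and is essentially identical to the paper's own argument: reduce to the $1249$ maximal planar graphs of order $11$ by the monotonicity of complements under edge addition and the minor-closedness of nIK graphs, then verify the complements via the Foisy / Taniyama--Yasuhara double-linked $D_4$ criterion using Naimi's Mathematica implementation of the Miller--Naimi algorithm. Your discussion of the limitations (the $D_4$ criterion being only sufficient, and the reliance on the correctness of the census and implementation) is accurate and somewhat more explicit than the paper's.
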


 From a topological perspective, Proposition \ref{conjecture1} is the natural continuation of the work of Battle, Harray, and Kodama \cite{BHK}, and Tutte \cite{Tutte}, who proved that the complement of a planar graph of order 9 is not planar, and the work of Lov\'asz and Shrijver \cite{LS}, and Kotlov, Lov\'asz and Vempala \cite{KLV}, which implies that the complement of a planar graph of order 10 is intrinsically linked. As always, a proof of Proposition \ref{conjecture1} without computer assistance might provide extra insight into the structure of the complements of planar graphs.

Theorem \ref{mainIK} shows the complement of a non-separating planar graph of order $2n-3$ contains a $K_n$ minor. It would be interesting to investigate, using techniques similar to those in this article, the connection between the order of a (non-separating) planar graph and the existence of a $K_{3,3}+K_t$ minor for the complement of the graph.

\section{Acknowledgements} The authors would like to thank Elena Pavelescu for the useful comments and suggestions.


\begin{thebibliography}{30}

\bibliographystyle{amsplain}

\bibitem{BHK} Battle, J., Harary, F., and Kodama, Y. ``Every planar graph with nine points has a nonplanar complement." Bulletin of the American Mathematical Society \textbf{68}, no. 6 (1962): 569-571.

\bibitem{CG} Conway, J. H. and Gordon, C.  ``Knots and links in spatial graphs." Journal of Graph Theory \textbf{7}, no. 4 (1983): 445--453.

\bibitem{DF} Dehkordi, H. R. and  Farr, G.``Non-separating planar graphs." Electronic Journal of Combinatorics \textbf{28} (1) (2021).

\bibitem{Foisy} Foisy, J. ``Intrinsically knotted graphs." Journal of Graph Theory \textbf{39}, no. 3 (2002): 178-187.

\bibitem{GMN} Goldberg, N., W. Mattman, T.W.,  and Naimi, R. ``Many, many more intrinsically knotted graphs." Algebraic \& Geometric Topology \textbf{14}, no. 3 (2014): 1801-1823.

\bibitem{H} Hadwiger, H. ``\"{U}ber eine klassifikation der streckenkomplexe." Vierteljschr. Naturforsch. Ges. Z\"{u}rich \textbf{88}, no. 2 (1943): 133-142.

\bibitem{KLV} Kotlov, A.,  Lov\'asz, L.,  and Vempala, S. ``The Colin de V\`erdiere number and sphere representations of a graph." Combinatorica  \textbf{17}, no. 4 (1997): 483-521.

\bibitem{LS} Lov\'asz, L. and Schrijver, A. ``A Borsuk theorem for antipodal links and a spectral characterization of linklessly embeddable graphs." Proceedings of the American Mathematical Society \textbf{126}, no. 5 (1998): 1275-1285.

\bibitem{MN} Miller, J. and Naimi, R.  "An algorithm for detecting intrinsically knotted graphs." Experimental Mathematics 23.1 (2014): 6-12.

\bibitem{PP} Pavelescu, A. and Pavelescu, E. ``Constructions stemming from non-separating planar graphs and their Colin de V\`erdiere invariant" arXiv preprint arXiv:2101.05740 (2021).

\bibitem{RS} Robertson, N. and Seymour, P. ``Graph minors. XX. Wagner's conjecture." Journal of Combinatorial Theory, Series B \textbf{92}, no. 2 (2004): 325-357.

\bibitem{TY} Taniyama, K. and Yasuhara, A. "Realization of knots and links in a spatial graph." Topology and its Applications 112.1 (2001): 87-109.

\bibitem{Tutte} Tutte, W.T. ``The non-biplanar character of the complete 9-graph." Canadian Mathematical Bulletin \textbf{6}, no. 3 (1963): 319-330.

\end{thebibliography}
\end{document}